\def\rouge{\textcolor{red}}
\def\bleu{\textcolor{blue}}
\newtheorem*{rep@theorem}{\rep@title}
\newcommand{\newreptheorem}[2]{
\newenvironment{rep#1}[1]{
\def\rep@title{#2 \ref{##1}}
\begin{rep@theorem}}
{\end{rep@theorem}}}
\def\auteur#1{{\sc #1}}
\def\titreref#1{{\em #1}}
\def\vol#1{{\bf #1}}
\newtheorem{lemma}{\bleu{Lemma}}
\newtheorem{conj}{\bleu{Conjecture}}
\newtheorem{prop}{\bleu{Proposition}}
\numberwithin{equation}{section}
\numberwithin{thm}{section}
\numberwithin{lemma}{section}
\numberwithin{rmk}{section}
\numberwithin{prop}{section}
\numberwithin{cor}{section}
\numberwithin{defn}{section}
\numberwithin{alg}{section}
\def\petit#1{\hbox{\scriptsize$#1$}}
\def\mbf#1{{\mathbf #1}}
\def\sign{\,\ \mathrm{sign}}
 \def\Q#1{(h_2-e_2)^{#1}}
\renewcommand{\S}{\mathbb{S}}
\newcommand{\fix}{\operatorname{fix}}
\def\qfoulkes#1#2#3#4{\langle [#1,#2]\!:\![#3,#4]\rangle_q}
\def\foulkes#1#2#3#4{\langle [#1,#2]\!:\![#3,#4]\rangle}
\def\define#1{\bleu{\bf{#1}}}
\def\GL{\mathrm{GL}}
\def\N{\mathbb{N}}
\def\C{\mathbb{C}}
\def\pref#1{{\rm (\ref{#1})}}
\def\Rat{\mathbb{Q}}
\newcommand{\scalar}[2]{{\langle#1,#2 \rangle}}
\def\charac{\raise 2pt\hbox{\large$\chi$}}
\newdimen\carrelength
\def\bleu{\textcolor{blue}}
\def\rouge{\textcolor{red}}
\title[$q$-Foulkes]{A $q$-Analog of Foulkes Conjecture}
\author{F.~Bergeron}
\address{\href{http://bergeron.math.uqam.ca}{D\'epartement de Math\'ematiques, Lacim, UQAM.}}
  \email{\href{mailto:bergeron.francois@uqam.ca}{bergeron.francois@uqam.ca}}
  \date{\rouge{\bf March, 2016}. This work was supported by NSERC}
\begin{document}


\begin{abstract} We propose a $q$-analog of classical plethystic  conjectures due to Foulkes. In our conjectures, a divided difference of plethysms of Hall-Littlewood polynomials $H_n(\mbf{x};q)$ replaces the analogous difference of plethysms of complete homogeneous symmetric functions $h_n(\mbf{x})$ in Foulkes conjecture. At $q=0$, we get back the original statement of Foulkes, and we show that our version holds at $q=1$.  We discuss further supporting evidence, as well as various generalizations.
\end{abstract}

\maketitle
 \parskip=0pt
{ \setcounter{tocdepth}{1}\parskip=0pt\footnotesize \tableofcontents}
\parskip=8pt  
\parindent=20pt


\section{Introduction} 
The aim of this text is to present and discuss a new $q$-analog of a conjecture due to Foulkes in his paper of 1949  (see~\cite{foulkes}). Recall that this classical conjecture states that the difference of plethysms\footnote{This operation is due to Littlewood~\cite{littlewood}, and its definition is recalled in an upcoming section.} 
    $$f_{a,b}:=h_b[h_a]- h_a[h_b],$$
 of homogeneous symmetric function $h_a$ and $h_b$ with $a\leq b$, expands with positive (integer) coefficients in the Schur basis $\{s_\mu\}_{\mu\vdash n}$ (i.e.: $\mu$ runs over the set of partitions of $n=a\,b$). For instance, we have
  $$f_{2,4}=s_{{422}}+s_{{2222}},\qquad {\rm and}\qquad
     f_{3,4}=s_{{732}}+s_{{5421}}+s_{{6222}}.$$
Here, symmetric functions are in a denumerable set of variables $\mbf{x}=x_1,x_2,x_3,\ldots$ (likewise later for $\mbf{y}=y_1,y_2,y_3,\ldots$), which are often not explicitly mentioned.
 Equivalently, Foulkes conjecture says that there is a monomorphism of $\GL(V)$-module going between the compositions of symmetric power $S^a(S^b(V))$ and $S^b(S^a(V))$, hence each $\GL(V)$-irreducible occurs with smaller multiplicity in $S^a(S^b(V))$ than it does in $S^b(S^a(V))$. Although many partial and related results have been obtained (see~\cite{briand,colmenarejo,deBoeck,manivelsylvester}), the conjecture is still open in the general case. A recent survey can be found in~\cite{cheung}. Brion~\cite{brion} has shown that it holds if $b$ is large enough with respect to $a$. Furthermore Mckay~\cite{mckay} has obtained an interesting propagation theorem which would be nice to extend to our context. Another generalization that seems to afford a natural $q$-version is that of~\cite{vessenes}. We explore this in Section-\ref{extensions}.

Our $q$-analog replaces the relevant complete homogeneous symmetric function $h_n$  by the Hall-Littlewood (or Macdonald) polynomial
    $$H_n(\mbf{x};q):=\sum_{\mu\vdash n} K_{\mu}(q) s_\mu(\mbf{x}),
    \qquad{\rm where}  \qquad K_{\lambda}(q)=\sum_\tau q^{c(\tau)},$$
  with $\tau$ running through the set of standard tableaux of shape $\lambda$, and $c(\tau)$ standing for the charge statistic. These are special cases\footnote{Because the second index is the one-part partition $(n)$.}  of Kostka-Foulkes polynomials. More details on the symmetric functions $H_n(\mbf{x};q)$ may be found in Appendix A, and especially relevant formulas are \pref{defHn} to \pref{defhnschur}. 
  
 To see how our upcoming statement corresponds to a $q$-analog of Foulkes conjecture, we recall that $H_n(\mbf{x};0)=h_n(\mbf{x})$. Hence we consider here a slightly different $q$-analog notion than the usual one, since the relevant specialization is at $q=0$ rather than $q=1$. 
\begin{conj}[$q$-Foulkes]\label{conjqFoulkes}
For any integers $0<a\leq b$, the Schur function expansion of the divided difference
\begin{equation}\label{qFoulkesconj}
  F_{a,b}(\mbf{x};q):= \frac{H_b[H_a] -H_a[H_b]}{1-q}
\end{equation}
 has coefficients in $\N[q]$, with the evident specialization $F_{a,b}(\mbf{x};0)=f_{a,b}=h_b[h_a]-h_a[h_b]$.
\end{conj}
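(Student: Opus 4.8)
The plan is to split the statement into three components of increasing difficulty: (i) that $F_{a,b}(\mbf{x};q)$ is actually well defined, i.e.\ that $1-q$ divides the numerator $H_b[H_a]-H_a[H_b]$; (ii) the asserted specialization $F_{a,b}(\mbf{x};0)=f_{a,b}$; and (iii) the positivity of the Schur coefficients in $\N[q]$, which is the genuine content of Conjecture~\ref{conjqFoulkes}. Throughout I treat $q$ as a scalar parameter that is inert under plethysm, so that each evaluation $q\mapsto c$ commutes with the operations $f\mapsto f[g]$.

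For (i) the key observation is that $H_n(\mbf{x};1)=p_1^n$. Indeed $K_\mu(1)=\sum_\tau 1=f^\mu$ counts the standard tableaux of shape $\mu$, so $H_n(\mbf{x};1)=\sum_{\mu\vdash n}f^\mu\,s_\mu$, and this is exactly the Schur expansion of $p_1^n=h_1^n$ coming from $\mathrm{RSK}$. Since $p_1$ is the plethystic unit on both sides and the outer argument acts as a ring homomorphism, one has $p_1^b[g]=g^b$, whence
\begin{equation*}
  H_b[H_a]\big|_{q=1}=\big(p_1^a\big)^b=p_1^{ab}=\big(p_1^b\big)^a=H_a[H_b]\big|_{q=1}.
\end{equation*}
Thus the numerator vanishes at $q=1$; as each of its Schur coefficients is a polynomial in $q$ over $\Z$ that vanishes at $1$, and $1-q$ is monic, the quotient has coefficients in $\Z[q]$ and $F_{a,b}(\mbf{x};q)$ is well defined. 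For (ii) I would use the complementary specialization $H_n(\mbf{x};0)=h_n$: evaluating the numerator at $q=0$ gives $h_b[h_a]-h_a[h_b]=f_{a,b}$ and the denominator gives $1$, and since $F_{a,b}$ is already a polynomial in $q$, evaluation commutes with the division, giving $F_{a,b}(\mbf{x};0)=f_{a,b}$.

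The real difficulty is (iii). As a first, attainable step I would establish the consistency check at $q=1$, namely Schur-positivity of $F_{a,b}(\mbf{x};1)$. Writing $H_n=p_1^n+(q-1)\dot H_n+O((q-1)^2)$ with $\dot H_n:=\sum_\mu K_\mu'(1)\,s_\mu$, and expanding both plethysms to first order about $q=1$ via $p_1^m[g]=g^m$ for the leading terms and the chain rule for the inner argument, one finds
\begin{equation*}
  F_{a,b}(\mbf{x};1)=a\,p_1^{\,b(a-1)}\dot H_b+\dot H_a[p_1^b]-b\,p_1^{\,a(b-1)}\dot H_a-\dot H_b[p_1^a].
\end{equation*}
Each $\dot H_n$ is itself Schur-positive, since $K_\mu'(1)=\sum_\tau c(\tau)\ge 0$, but the expression above is a genuine \emph{difference}, so its positivity is a Foulkes-type statement in its own right; I expect the cleanest route is to realize it as the character of an explicit map of $\GL(V)$-modules, deforming the classical $q=0$ picture. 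For the full conjecture I would attempt to lift the monomorphism $S^a(S^b(V))\hookrightarrow S^b(S^a(V))$ (and Brion's stabilization for $b\gg a$) to a filtered setting whose associated graded records the charge statistic, so that the $q$-graded multiplicities appear as differences of honest dimensions.

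The main obstacle is precisely this last step. The full $\N[q]$-positivity contains the classical Foulkes conjecture as its $q=0$ slice, so it cannot be easier than Foulkes, and a naive term-by-term comparison fails: as the $q=1$ formula already shows, $F_{a,b}$ is an honest difference of positive quantities rather than a manifestly positive sum. Producing a single combinatorial or representation-theoretic model that simultaneously controls the charge grading and the plethystic containment is where I expect the argument to stall, which is why the statement is posed as a conjecture and only the $q=0$ and $q=1$ specializations are within reach.
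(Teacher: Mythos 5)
The statement you are addressing is a conjecture, and the paper does not prove it either; what the paper actually establishes is precisely the perimeter you sketch, namely that division by $1-q$ is legitimate, that $q=0$ recovers Foulkes, and --- going one step further than you do --- that the conjecture \emph{holds} at $q=1$. Your parts (i) and (ii) coincide with the paper's argument in Section~\ref{resultats}: $H_n(\mbf{x};1)=p_1^n$ and evaluation at $q=1$ commutes with plethysm, so both plethysms equal $p_1^{ab}$ and the numerator is divisible by $1-q$; and $H_n(\mbf{x};0)=h_n$ gives the specialization. One caveat: you declare $q$ ``inert under plethysm'', whereas the paper's rules~\pref{plethrules} impose $p_k[q]=q^k$, so the inner $q$ in $H_b[H_a]$ is raised to powers. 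This does not affect the specializations at $q=0$ or $q=1$ (nor, as it happens, your first-derivative formula, since the chain-rule factor $k$ only survives on the part $1^b$ where $k=1$), but for generic $q$ your $F_{a,b}(\mbf{x};q)$ would differ from the one the conjecture is about, and your later appeal to ``the chain rule for the inner argument'' is inconsistent with the inert convention you announced.

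The one concrete gap is your treatment of $q=1$. Your formula $F_{a,b}(\mbf{x};1)=a\,p_1^{b(a-1)}\dot H_b+\dot H_a[p_1^b]-b\,p_1^{a(b-1)}\dot H_a-\dot H_b[p_1^a]$ is correct and is equivalent to the paper's Lemma giving~\pref{formuledelH}, but you then stop, asserting that its positivity is ``a Foulkes-type statement in its own right''. It is not: the key observation you miss is that $\dot H_n=\sum_\mu K_\mu'(1)\,s_\mu=\binom{n}{2}h_1^{n-2}e_2$ is a \emph{single} Schur-positive term (check it on the power-sum expansion~\pref{defhnpower}). Substituting this, and using $e_2[h_1^b]=\bigl((h_2+e_2)^b-(h_2-e_2)^b\bigr)/2=\mathcal{O}_b$, your expression collapses to the paper's closed form
$$F_{a,b}(\mbf{x};1)=\tfrac12\Bigl(ab(b-a)\,h_1^{ab-2}e_2+a(a-1)\,h_1^{(a-2)b}\,\mathcal{O}_b-b(b-1)\,h_1^{a(b-2)}\,\mathcal{O}_a\Bigr),$$
an explicit element of $\Z[h_1,h_2,e_2]$. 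The paper then shows this lies in $\N[h_1,h_2,e_2]$ (via the recursion of Proposition~\ref{proprec} and Appendix~B), which suffices for Schur positivity because $h_1$, $h_2$, $e_2$ are Schur positive and products of Schur-positive functions are Schur positive. So the residual ``difference of positive quantities'' you rightly identify reduces to the positivity of an explicit polynomial in the two commuting letters $h_2\pm e_2$, not to anything of Foulkes difficulty. Your assessment of the full conjecture --- that it contains classical Foulkes as its $q=0$ slice and is therefore out of reach --- is correct and matches the paper's framing, but a complete account of the supporting evidence requires closing the $q=1$ case as above rather than deferring it.
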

We will explain later why it makes sense to divide by $1-q$. For instance, with $a=2$ and $b=3$, we find after calculation that  expression \pref{qFoulkesconj} expands in the Schur basis as as
\begin{eqnarray*}  F_{2,3}(\mbf{x};q)&=&
( {q}^{3}+ {q}^{2}+ q+1 )\,s_{222}+( q^2+q )\Big(( q^2+q )\,s_{{33}}+( {q}^{3}+ {q}^{2}+ q+1 )\, s_{{321}}\\
&&\qquad\qquad+(q^2+q )\, s_{{3111}} +({q}^{4}+{q}^{3}+2\,{q}^{2}+q+1)\, s_{{2211}}\\
 &&\qquad\qquad+( 2\,{q}^{3}+q^2+q )\, s_{{21111}}+
( {q}^{4}+q^2 )\, s_{{111111}}\Big).
\end{eqnarray*} 
This does specialize, at $q=0$, to the corresponding case of Foulkes conjecture:
\begin{equation}
   f_{2,3}=h_3[h_2]-h_2[h_3]=s_{222}
 \end{equation}
 Incidentally, the classical Hilbert's reciprocity law~\cite{hermite} is equivalent to the fact that all Schur functions occurring in the difference $h_b[h_a]-h_a[h_b]$ are indexed par partitions having at least $3$ parts. As seen above, this reciprocity law does not generalize to our $q$-version, since the Schur expansion of $F_{2,3}(\mbf{x};q)$ involves $s_{33}(\mbf{x})$. 
 
 A second part of Foulkes conjecture,  shown to be true by Brion~\cite{brion}, concerns the stability of coefficients as $b$ grows while $a$ remains fixed. To simplify its statement, we consider the linear operator which sends a Schur function $s_\mu(\mbf{x})$ to $s_{\overline{\mu}}(\mbf{x})$, where $\overline{\mu}$ is the partition obtained by removing the largest part in $\mu$. Let us write $\overline{f}$ for the effect of this operator on a symmetric function $f$. 
For example, we get
   $$\overline{s_{622}+s_{442}+s_{4222}+s_{22222}} =s_{22}+s_{42}+s_{222}+s_{2222},$$
  which is clearly not homogeneous.
Using this notation convention, the second part of Foulkes conjecture states that, for all $a\leq b$, the Schur expansion of
\begin{equation}\label{foulkes2}
  \overline{f_{a,b+1}}-\overline{f_{a,b}}= \overline{(h_{b+1}[h_a]-h_a[h_{b+1}])} -\overline{ (h_{b}[h_a]-h_a[h_{b}])}
\end{equation}
also affords positive integers polynomials as coefficients. 
Observe that the ``Bar'' operator allows the comparison   homogeneous functions of different degrees, namely $f_{a,b+1}$ of degree $a(b+1)$ with $f_{a,b}$ of degree $ab$.
Instances of~\pref{foulkes2} are
\begin{eqnarray*}
  \overline{f_{2,4}}- \overline{f_{2,3}}&=&\overline{(s_{422}+s_{2\bleu{222}})} -\overline{s_{222}},  \qquad (\hbox{where}\ \overline{s_{422}}\ \hbox{cancels with}\  \overline{s_{222}}),\\
              &=&s_{\bleu{222}},\\
   \overline{f_{2,5}}- \overline{f_{2,4}}&=&\overline{(s_{622}+s_{4\bleu{42}}+s_{4222}+s_{2\bleu{2222}})} -\overline{(s_{422}+s_{2222})},\\
      &=&s_{\bleu{42}}+s_{\bleu{2222}},\\
   \overline{f_{2,6}}- \overline{f_{2,5}}&=&s_{44}+s_{422}+s_{22222}.
\end{eqnarray*}
In~\cite{brion}, Brion has reduced \pref{foulkes2} to showing, which he did, that $\scalar{h_a[h_b]}{s_\lambda}\leq \scalar{h_a[h_{b+1}]}{s_{\lambda+(a)}}$,
where the \define{sum} $\lambda+\mu$ of two partitions $\lambda$ and $\mu$, is the partitions whose parts are $\lambda_i+\mu_i$ (with the convention that $\lambda_i=0$ if $i$ greater than the number of parts of $\lambda$).
A similar phenomenon also seems to hold in our context, leading us to state the following.
\begin{conj}[$q$-stability]
For any integers $0<a\leq b$, and any Schur function $s_\lambda$, we have
\begin{equation}\label{qFoulkesconjstab}
     \langle\overline{F_{a,b+1}(\mbf{x};q)}-\overline{F_{a,b}(\mbf{x};q)},s_\lambda(\mbf{x})\rangle\in \N[q].
  \end{equation}
\end{conj}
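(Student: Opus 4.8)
The plan is to follow Brion's strategy for the classical statement~\pref{foulkes2}, refining each step so as to keep track of the charge grading carried by $q$. First I would use homogeneity to eliminate the ``Bar'' operator entirely. Since $F_{a,b}(\mbf{x};q)$ is homogeneous of degree $ab$, the a priori infinite expansion $\langle\overline{F_{a,b}},s_\lambda\rangle=\sum_{k\geq\lambda_1}\langle F_{a,b},s_{(k,\lambda)}\rangle$ collapses to a single term, because the degree forces $k=ab-|\lambda|$. Writing $\mu:=(ab-|\lambda|,\lambda_1,\lambda_2,\ldots)$ (and reading any term whose first entry is $<\lambda_1$ as zero), the increment of the degree from $ab$ to $a(b+1)$ is exactly a shift of the first part by $a$, so that
\begin{equation}
  \langle\overline{F_{a,b+1}}-\overline{F_{a,b}},s_\lambda\rangle=\langle F_{a,b+1},s_{\mu+(a)}\rangle-\langle F_{a,b},s_\mu\rangle .
\end{equation}
This is the exact $q$-analog of Brion's reduction, now phrased directly for the divided difference $F_{a,b}$.

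Next I would clear the denominator using $(1-q)\,F_{a,b}=H_b[H_a]-H_a[H_b]$ and split the right-hand side into an \emph{outer} and an \emph{inner} contribution,
\begin{equation}
  (1-q)\,\langle\overline{F_{a,b+1}}-\overline{F_{a,b}},s_\lambda\rangle=\Delta^{\mathrm{out}}_{a,b}(\mu;q)-\Delta^{\mathrm{in}}_{a,b}(\mu;q),
\end{equation}
where $\Delta^{\mathrm{out}}_{a,b}(\mu;q):=\langle H_{b+1}[H_a],s_{\mu+(a)}\rangle-\langle H_b[H_a],s_\mu\rangle$ governs the plethysms with growing outer index, and $\Delta^{\mathrm{in}}_{a,b}(\mu;q):=\langle H_a[H_{b+1}],s_{\mu+(a)}\rangle-\langle H_a[H_b],s_\mu\rangle$ those with growing inner index. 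The task is then to show that this combination is divisible by $1-q$ with quotient in $\N[q]$.

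The heart of the matter is a charge-graded ($q$-refined) monotonicity for each family. For the inner family I would look for a charge-preserving injection of the standard tableaux computing $\langle H_a[H_b],s_\mu\rangle$ into those computing $\langle H_a[H_{b+1}],s_{\mu+(a)}\rangle$, realizing $\Delta^{\mathrm{in}}_{a,b}(\mu;q)\in\N[q]$; at $q=0$ this must reduce to Brion's inequality $\langle h_a[h_b],s_\mu\rangle\leq\langle h_a[h_{b+1}],s_{\mu+(a)}\rangle$, since $F_{a,b}(\mbf{x};0)=f_{a,b}$ turns the whole identity into the classical stability~\pref{foulkes2}, which thus serves as the base case. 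For the outer family I expect a cleaner behaviour, ideally an exact $q$-stabilization of the bar-differences coming from a Pieri- or branching-type description of $H_{b+1}[H_a]$ in terms of $H_b[H_a]$; the formulas~\pref{defHn}--\pref{defhnschur} are the natural input here.

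The hard part will be to realize these two $q$-monotonicities on a \emph{single} combinatorial model on which both the first-part shift $\mu\mapsto\mu+(a)$ and the increment $b\mapsto b+1$ act compatibly with the charge statistic, so that $\Delta^{\mathrm{out}}-\Delta^{\mathrm{in}}$ is visibly a nonnegative $\N[q]$-combination \emph{after} dividing by $1-q$. Brion obtained the $q=0$ inequality geometrically, through sections of line bundles, and I do not expect that argument to see the charge grading. The most promising route is therefore a Hall--Littlewood analog: a charge-graded crystal, or a filtration of the modules $S^a(S^bV)$ and $S^b(S^aV)$ whose associated graded is governed by the $K_\mu(q)$, on which Brion's injection can be upgraded to a $q$-weighted one. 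Reconciling the $1-q$ divisibility with coefficientwise positivity on this model --- rather than merely on the $q=0$ and $q=1$ fibers --- is the step I expect to resist a purely formal treatment.
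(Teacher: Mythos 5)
The statement you are addressing is labelled a \emph{conjecture} in the paper, and the paper offers no proof of it --- only supporting evidence (machine verification for $1<ab\leq 25$, Brion's theorem for the specialization $q=0$, and a short argument establishing the $q=1$ specialization via the identity $F_{a,b+1}(\mbf{x};1)-h_1^aF_{a,b}(\mbf{x};1)\in\N[h_1,h_2,e_2]$ together with the Pieri rule). So there is no proof to match yours against, and your proposal does not close the gap either: it is a strategy whose decisive steps are explicitly left open. Concretely, the only part you actually establish is the elimination of the ``Bar'' operator, i.e.\ that by homogeneity $\scalar{\overline{F_{a,b+1}}-\overline{F_{a,b}}}{s_\lambda}=\scalar{F_{a,b+1}}{s_{\mu+(a)}}-\scalar{F_{a,b}}{s_\mu}$ with $\mu=(ab-|\lambda|,\lambda)$. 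That reduction is correct and is exactly Brion's, but it is bookkeeping, not the content of the conjecture.

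The genuine gaps are two. First, the charge-preserving injection realizing $\Delta^{\mathrm{in}}_{a,b}(\mu;q)\in\N[q]$ is not constructed, and there is no known combinatorial model for $\scalar{H_a[H_b]}{s_\mu}$ on which charge behaves functorially under $b\mapsto b+1$; even the $q=0$ case required Brion's geometric argument, which, as you note, does not see the grading. Second, and more seriously, your decomposition $(1-q)\scalar{\overline{F_{a,b+1}}-\overline{F_{a,b}}}{s_\lambda}=\Delta^{\mathrm{out}}-\Delta^{\mathrm{in}}$ targets the wrong kind of positivity: knowing that each of $\Delta^{\mathrm{out}}$ and $\Delta^{\mathrm{in}}$ lies in $\N[q]$ says nothing about their difference, and what \pref{qFoulkesconjstab} requires is that $\Delta^{\mathrm{out}}-\Delta^{\mathrm{in}}$ vanish at $q=1$ \emph{and} have quotient by $1-q$ in $\N[q]$. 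Divisibility by $1-q$ does follow from $H_n(\mbf{x};1)=p_1^n$, but positivity of the quotient is precisely the conjecture, and no mechanism in your outline produces it. If you want a tractable partial result in the spirit of the paper, the $q=1$ statement can be proved outright from the recursion for $\Theta_a(b)$ and property (5) of the ``Bar'' operator listed in Appendix B; the full $q$-statement remains open.
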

Here we use the usual scalar product $\langle -,-\rangle$ on symmetric function, for which the Schur functions form an orthonormal basis.
The smallest non-trivial example is:
\begin{eqnarray*}
\overline{F_{2,4}(\mbf{x};q)}-\overline{F_{2,3}(\mbf{x};q)}&=&(1+q)\Big(\left( \petit{{q}^{3}+2\,{q}^{4}+2\,{q}^{5}+{q}^{6}} \right) s_{{3}}+ \left(\petit{{q}^{2}+2\,{q}^{3}+3\,{q}^{4}+3\,{q}^{5}+2\,{q}^{6}+{q}^{7}} \right) s_
{{21}}\\
&& + \left(\petit{ {q}^{3}+2\,{q}^{4}+2\,{q}^{5}+{q}^{6}} \right) s_{{111}}
+ \left(\petit{ {q}^{2}+2\,{q}^{4}+{q}^{6} }\right) s_{{4}}\\
&& + \left( \petit{q+2\,{q}^{2}+6\,{q}^{3}+7\,{q}^{4}+9\,{q}^{5}+6\,{q}^{6}+4\,{q}^{7}+{q}^{8}} \right) s_{{31}}\\
&& + \left( \petit{q+3\,{q}^{2}+4\,{q}^{3}+7\,{q}^{4}+5\,{q}^{5
}+6\,{q}^{6}+2\,{q}^{7}+2\,{q}^{8} }\right) s_{{22}}\\
&& + \left( \petit{2\,{q}^{2}
+6\,{q}^{3}+10\,{q}^{4}+13\,{q}^{5}+11\,{q}^{6}+8\,{q}^{7}+3\,{q}^{8}+
{q}^{9}} \right) s_{{211}}\\
&& + \left( \petit{{q}^{3}+4\,{q}^{4}+6\,{q}^{5}+7\,{q}
^{6}+4\,{q}^{7}+2\,{q}^{8} }\right) s_{{1111}}\\
&& + \left( \petit{q+2\,{q}^{2}+5\,
{q}^{3}+6\,{q}^{4}+8\,{q}^{5}+6\,{q}^{6}+5\,{q}^{7}+2\,{q}^{8}+{q}^{9}}
 \right) s_{{32}}\\
&& + \left( \petit{3\,{q}^{2}+4\,{q}^{3}+10\,{q}^{4}+9\,{q}^{5}
+11\,{q}^{6}+6\,{q}^{7}+4\,{q}^{8}+{q}^{9} }\right) s_{{311}}\\
&& + \left( \petit{2
\,q+4\,{q}^{2}+9\,{q}^{3}+12\,{q}^{4}+15\,{q}^{5}+13\,{q}^{6}+11\,{q}^
{7}+6\,{q}^{8}+3\,{q}^{9}+{q}^{10}} \right) s_{{221}}\\
&& + \left( \petit{2\,{q}^{2
}+6\,{q}^{3}+11\,{q}^{4}+16\,{q}^{5}+17\,{q}^{6}+14\,{q}^{7}+9\,{q}^{8
}+4\,{q}^{9}+{q}^{10}} \right) s_{{2111}}\\
&& + \left( \petit{{q}^{3}+3\,{q}^{4}+6
\,{q}^{5}+7\,{q}^{6}+8\,{q}^{7}+5\,{q}^{8}+3\,{q}^{9}+{q}^{10}}
 \right) s_{{11111}}\\
&& +\rouge{  \left( \petit{1+2\,{q}^{2}+{q}^{3}+4\,{q}^{4}+2\,{q}^{5
}+5\,{q}^{6}+{q}^{7}+3\,{q}^{8}+{q}^{10}} \right) s_{{222}}}\\
&& +\left( \petit{q+{
q}^{2}+4\,{q}^{3}+5\,{q}^{4}+9\,{q}^{5}+8\,{q}^{6}+9\,{q}^{7}+5\,{q}^{
8}+4\,{q}^{9}+{q}^{10}+{q}^{11}} \right) s_{{2211}}\\
&& + \left( \petit{{q}^{2}+{q}
^{3}+5\,{q}^{4}+5\,{q}^{5}+9\,{q}^{6}+7\,{q}^{7}+7\,{q}^{8}+3\,{q}^{9}
+2\,{q}^{10}}\right) s_{{21111}}\\
&& + \left( \petit{{q}^{3}+{q}^{4}+3\,{q}^{5}+3
\,{q}^{6}+4\,{q}^{7}+3\,{q}^{8}+3\,{q}^{9}+{q}^{10}+{q}^{11}} \right) s
_{{111111}}\\
&& + \left( \petit{{q}^{4}+{q}^{6}+{q}^{8}+{q}^{10}} \right) s_{{
1111111}}\Big),
\end{eqnarray*}
and we do observe that this specializes to the (much simpler) classical Foulkes case when we set $q=0$. Another stability in the vein of Manivel~\cite{manivelgaussian},  that seems to hold in our $q$-context, is that
   $$(\overline{F_{a+1,b+1}(\mbf{x};q)}-\overline{F_{a+1,b}(\mbf{x};q)})-
      (\overline{F_{a,b+1}(\mbf{x};q)}-\overline{F_{a,b}(\mbf{x};q)}),$$
is Schur positive, for all $a<b$.

\section{Supporting facts and results}\label{resultats}
  Beside having checked our conjectures by explicit computer calculation for all $1<a\,b\leq 25$, we now show that they hold true at $q=1$. Moreover we give a few interesting related results.
First off,  division by $1-q$ makes sense in the statement of \pref{qFoulkesconj}, since for any $a$ and $b$, we have $H_a[H_b](\mbf{x};1)= H_b[H_a](\mbf{x};1)$.
   This is almost immediate, since $H_n(\mbf{x};1)=p_1^n$ and  evaluation at $q=1$ is compatible\footnote{Meaning that they commute as operators. Observe that this is not so with the evaluation at $q=-1$.} with plethysm, so that $H_a[H_b](\mbf{x};1)=p_1^{ab}$. 
Hence, the $q$-polynomial $F_{a,b}(\mbf{x};q)$ (with coefficients in $\Lambda$) vanishes at $q=1$, and it is divisible by $1-q$.   
  
\subsection*{Dimension Count}
As mentioned previously, when a homogeneous degree $n$ symmetric function $f$ occurs as a (graded) Frobenius transform of the character of an $\S_n$-module, the dimension (Hilbert series) of this module may be readily calculated
by taking its scalar product with $p_1^n$. On the other hand, general principles insure that there exists such a module (albeit not explicitly known) whenever $f$ expands positively (with coefficients in $\N[q]$) in the Schur function basis. Finding an explicit formula for this ``dimension'' may give a clue on what kind of module one should look for in order to prove the conjectures.
With this in mind, let us set the notation 
   $$\dim(f):=\langle p_1^n,f\rangle.$$
For instance, we may easily calculate that
\begin{equation}\label{partionab}
    \dim(h_a[h_b]) = \frac{(ab)!}{a!\, b!^a},
 \end{equation}
since $p_1^{ab}$ may only occur in the plethysm $h_a[h_b]$ as
     $$\frac{p_1^a}{a!}\left[\frac{p_1^b}{b!}\right]= \frac{p_1^{ab}}{a!\, b!^a}. $$
In a classical combinatorial setup, formula~\pref{partionab} is easily interpreted as the number of partitions of a set of cardinality $ab$, into blocks each having size $b$. We say that this is a $b^a$-partition.
Indeed, using a general framework such as the Theory of Species (see~\cite{species}), it is well understood that $h_a[h_b]$ may be  interpreted as the Polya cycle index enumerator of such partitions, i.e.:
    $$h_a[h_b]  = \frac{1}{n!}\sum_{\sigma\in \S_{n}}
            \fix_\sigma p_1^{d_1}p_2^{d_2}\cdots p_n^{d_n},$$
   where $n=ab$, and $d_k$ denotes the number of cycles of size $k$ in $\sigma$. Here, we further denote by $ \fix_\sigma$ the number of $b^a$-partitions that are fixed by a permutation $\sigma$, of the underlying elements. It follows that 
\begin{equation}\label{dimFoulkes}
    \dim(F_{a,b}(\mbf{x};0)) = (ab)!\left(\frac{1}{b!\, a!^b}-  \frac{1}{a!\, b!^a}\right),
  \end{equation}
  is the difference between the number of $a^b$-partitions and $b^a$-partitions. Some authors have attempted to exploit this fact to prove Foulkes conjecture (for positive and negative results along these lines see~\cite{paget,pylyavskyy,sivek}).
  
 It is interesting that we have the following very nice $q$-analog (at $0$) of \pref{dimFoulkes}.
 \begin{prop} For all $a<b$, we have
\begin{equation}\label{dimqFoulkes}
    \dim(F_{a,b}(\mbf{x};q)) = \frac{(ab)!}{1-q}\left(\frac{[b]_q!}{b!}\frac{([a]_q!)^b}{a!^b}-\frac{[a]_q!}{a!}\frac{([b]_q!)^a}{b!^a}\right),
  \end{equation}
and, letting $q\mapsto 1$, we find that
  \begin{equation}\label{dim1Foulkes}
    \dim(F_{a,b}(\mbf{x};1)) = \frac{(ab)!(a-1)(b-1)(b-a)}{4}.
  \end{equation}
 \end{prop}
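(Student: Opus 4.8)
The plan is to reduce everything to the single coefficient extraction $\dim(f)=\langle p_1^n,f\rangle$, which picks out $n!$ times the coefficient of $p_1^n$ in the power-sum expansion of a degree-$n$ symmetric function $f$. Writing $n=ab$ and using linearity, we get $\dim(F_{a,b}(\mbf{x};q))=\frac{1}{1-q}\big(\langle p_1^{ab},H_b[H_a]\rangle-\langle p_1^{ab},H_a[H_b]\rangle\big)$, so it suffices to evaluate $\langle p_1^{ab},H_a[H_b]\rangle$, the other term following by exchanging $a$ and $b$. The essential input is the one-variable identity $\langle p_1^n,H_n(\mbf{x};q)\rangle=[n]_q!$. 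Since $\langle p_1^n,s_\lambda\rangle$ equals the number $f^\lambda$ of standard tableaux of shape $\lambda$, and $H_n=\sum_{\lambda\vdash n} K_\lambda(q)\,s_\lambda$, this amounts to the sum rule $\sum_{\lambda\vdash n}f^\lambda K_\lambda(q)=[n]_q!$; I would prove it by RSK, which turns the left side into $\sum_{w\in\S_n}q^{c(Q(w))}$, together with the fact that charge is a Mahonian statistic, so that its generating function over $\S_n$ is $[n]_q!$. Alternatively one reads this off the power-sum formulas recalled in Appendix A.

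Next I would extract the coefficient of $p_1^{ab}$ from the plethysm. Expanding the outer factor as $H_a=\sum_{\mu\vdash a}\alpha_\mu(q)\,p_\mu$, one has $H_a[H_b]=\sum_\mu\alpha_\mu(q)\prod_i p_{\mu_i}[H_b]$. The key observation is that $p_k[H_b]$ only involves power sums $p_j$ with $j$ divisible by $k$ (because $p_k$ sends $p_j\mapsto p_{jk}$), so a factor $p_{\mu_i}[H_b]$ can contribute a pure $p_1$-term only when $\mu_i=1$. Hence the only partition $\mu$ feeding into the coefficient of $p_1^{ab}$ is $\mu=(1^a)$, which also sidesteps the plethystic action on the parameter $q$ (the source of the incompatibility with $q=-1$ noted in the paper), since for parts equal to $1$ one has $q\mapsto q^1=q$. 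This leaves $\alpha_{(1^a)}(q)$ times the coefficient of $p_1^{ab}$ in $H_b^{\,a}$, namely $\frac{[a]_q!}{a!}\big(\frac{[b]_q!}{b!}\big)^{a}$, where I used $\alpha_{(1^a)}(q)=\frac{1}{a!}\langle p_1^a,H_a\rangle=\frac{[a]_q!}{a!}$ and likewise for the coefficient of $p_1^b$ in $H_b$. Multiplying by $(ab)!$ gives $\langle p_1^{ab},H_a[H_b]\rangle=(ab)!\,\frac{[a]_q!}{a!}\frac{([b]_q!)^a}{b!^a}$; subtracting the $a\leftrightarrow b$ version and dividing by $1-q$ yields formula \pref{dimqFoulkes}.

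Finally, for \pref{dim1Foulkes} I would pass to the limit $q\to1$. Setting $A(q)=[a]_q!/a!$ and $B(q)=[b]_q!/b!$, the bracket in \pref{dimqFoulkes} is $g(q)=B A^b-A B^a$ with $A(1)=B(1)=1$, so $g(1)=0$ and the quotient $g(q)/(1-q)$ is a $0/0$ indeterminate; L'Hôpital gives $\dim(F_{a,b}(\mbf{x};1))=-(ab)!\,g'(1)$. Differentiating, $g'(1)=(1-a)B'(1)+(b-1)A'(1)$, so I only need $A'(1)$ and $B'(1)$, i.e. the logarithmic derivative of $[n]_q!=\prod_{k=1}^n(1+q+\cdots+q^{k-1})$ at $q=1$. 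A short computation gives $\frac{d}{dq}\log[n]_q!\big|_{q=1}=\sum_{k=1}^n\frac{\binom{k}{2}}{k}=\frac{n(n-1)}{4}$, hence $A'(1)=\frac{a(a-1)}{4}$ and $B'(1)=\frac{b(b-1)}{4}$. Substituting and factoring, $g'(1)=\frac{(b-1)(1-a)(b-a)}{4}$, and therefore $-(ab)!\,g'(1)=(ab)!\,\frac{(a-1)(b-1)(b-a)}{4}$, which is \pref{dim1Foulkes}.

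I expect the main obstacle to be the clean justification of the lemma $\langle p_1^n,H_n\rangle=[n]_q!$ (i.e. the Mahonian character of the charge statistic underlying the sum rule) together with the careful bookkeeping of the plethystic treatment of $q$ in the coefficient extraction. Once the reduction ``only $\mu=(1^a)$ survives'' is in place, the remainder is routine algebra and a one-line limit computation.
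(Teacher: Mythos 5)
Your proof is correct and follows essentially the same route as the paper: extract the coefficient of $p_1^{ab}$ in the plethysm (only the $p_1^a/a!$ term of the outer $H_a$ can contribute, which also neutralizes the plethystic action on $q$) to obtain \pref{dimqFoulkes}, then evaluate the $q\to 1$ limit via logarithmic derivatives of $[n]_q!$. In fact the paper's own computation of $\dim(H_a[H_b])$ is left as an unfinished sentence in the source, so your coefficient-extraction argument supplies exactly the missing step, and your value $\mathrm{D}_{\mathrm{log}}[n]_q!\big|_{q=1}=n(n-1)/4=\tfrac{1}{2}\binom{n}{2}$ is the correct one (the paper's displayed $\tfrac12\binom{b-1}{2}+\tfrac{b}{2}\binom{a-1}{2}$ appears to be off by a shift, though the discrepancy happens to cancel in the final difference).
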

  \begin{proof}[\bf Proof]
We first calculate $\dim(H_a[H_b])$ 
directly as follows

Now, exploiting classical properties of the logarithmic derivative $\mathrm{D}_{\mathrm{log}}\, f:=f'/f$ (with respect to $q$), we easily calculate that 
    $$\mathrm{D}_{\mathrm{log}} \,[b]_q!\,([a]_q!)^b  \Big|_{q=1}= \frac{1}{2}\binom{b-1}{2}+\frac{b}{2}\binom{a-1}{2}.
    $$
From this we may readily obtain that $\lim_{q\to 1}  \dim(F_{a,b}(\mbf{x};q)) $ gives~\pref{dim1Foulkes}.
 \end{proof}

\newcommand{\sbinom}[2]{\Big(\genfrac{}{}{0pt}{0}{#1}{#2}\Big)}
 \section{The conjecture holds at \texorpdfstring{$q=1$}{q}}
We start with an explicit formula that will be helpful in the sequel, setting the simplifying notation
\begin{equation}
    \mathcal{E}_b:=( \left( h_{{2}}+e_{{2}} \right) ^{b}+ \left( h_{{2}}-e_{{2}} \right) ^{b} )/2,\qquad{\rm and}\qquad
    \mathcal{O}_b:=\frac{1}{2}\,\left( ( h_{{2}}+e_{{2}} ) ^{b}- ( h_{{2}}-e_{{2}} ) ^{b} \right),
 \end{equation}
 for the odd-part of $( h_{{2}}+e_{{2}} ) ^{b}$ in the ``variable'' $e_2$.

\begin{lemma} For all $a$ and $b$, we have the divided difference evaluation
   \begin{equation}\label{formuledelH}
     \lim_{q\to 1}  \frac{h_1^{ab} -H_a[H_b]}{1-q}= 
    a\, \sbinom{b}{2}\,h_1^{ab-2}\,e_2+\sbinom{a}{2}\, h_1^{(a-2)b}\, \mathcal{O}_b.
    \end{equation}
\end{lemma}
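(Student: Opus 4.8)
The plan is to recognize the divided difference as an ordinary $q$-derivative at $q=1$ and to evaluate it in the power-sum basis. Since evaluation at $q=1$ commutes with plethysm and $H_n(\mbf{x};1)=p_1^{\,n}$, we have $H_a[H_b]\big|_{q=1}=p_1^{ab}=h_1^{ab}$, so the numerator vanishes at $q=1$ and the quotient is a genuine difference quotient:
\[
   \lim_{q\to1}\frac{h_1^{ab}-H_a[H_b]}{1-q}=\frac{d}{dq}\big(H_a[H_b]\big)\Big|_{q=1}.
\]
Thus the whole statement reduces to evaluating this single derivative, which I would carry out in the power-sum basis, where both plethysm and the behaviour near $q=1$ are most transparent.

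First I would record the first-order behaviour of a single $H_n$. From the power-sum expansion of the $H_n$ (Appendix~A), the coefficient of $p_\lambda$ in $H_n$ is
\[
   c_\lambda(q)=\frac{(1-q)^{\,n-\ell(\lambda)}}{z_\lambda}\,\frac{[n]_q!}{\prod_i[\lambda_i]_q},
\]
so it vanishes at $q=1$ to order exactly $n-\ell(\lambda)$ (the rightmost factor being finite and nonzero there). Hence only $\lambda=(1^n)$, which survives, and $\lambda=(2,1^{n-2})$, which has a simple zero, can contribute to $\frac{d}{dq}H_n|_{q=1}$. A short order-of-vanishing computation gives $c_{(1^n)}'(1)=\tfrac12\binom{n}{2}$ and $c_{(2,1^{n-2})}'(1)=-\tfrac12\binom{n}{2}$, whence
\[
   \frac{d}{dq}H_n\Big|_{q=1}=\tfrac12\binom{n}{2}\big(p_1^{\,n}-p_1^{\,n-2}p_2\big)=\binom{n}{2}\,h_1^{\,n-2}\,e_2.
\]

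Next I would differentiate the plethysm itself. Expanding the outer factor as $H_a=\sum_{\mu\vdash a}c_\mu(q)\,p_\mu$ gives $H_a[H_b]=\sum_\mu c_\mu(q)\prod_i p_{\mu_i}[H_b]$, where each $p_k$ additionally substitutes $q\mapsto q^k$ inside $H_b$. Applying the product rule at $q=1$ and using $c_\mu(1)=0$ for $\mu\neq(1^a)$ splits the answer into two pieces. In the piece where the outer coefficients are differentiated, the same order-of-vanishing argument leaves only $\mu=(1^a)$ and $\mu=(2,1^{a-2})$; evaluating the remaining plethystic factors at $q=1$, where $p_k[H_b]\mapsto p_k[p_1^b]=p_k^{\,b}$, produces $\tfrac12\binom{a}{2}\big(p_1^{ab}-p_1^{(a-2)b}p_2^{\,b}\big)$, which is precisely $\binom{a}{2}\,h_1^{(a-2)b}\,\mathcal{O}_b$ once one recalls $(h_2+e_2)^b=p_1^{2b}$ and $(h_2-e_2)^b=p_2^{\,b}$, so that $\mathcal{O}_b=\tfrac12(p_1^{2b}-p_2^{\,b})$. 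In the piece where the inner function is differentiated only $\mu=(1^a)$ survives (since $c_\mu(1)=0$ otherwise), and there $p_1[H_b]=H_b$, so this piece equals $a\,(p_1^b)^{a-1}\,\frac{d}{dq}H_b|_{q=1}=a\binom{b}{2}h_1^{ab-2}e_2$. Summing the two pieces gives \pref{formuledelH}.

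The work is almost entirely bookkeeping, and the main thing to get right is separating the \emph{outer} variation (the $q$-dependence of the coefficients $c_\mu$, which produces the $p_2^{\,b}$ and hence $\mathcal{O}_b$) from the \emph{inner} variation (the $q$-dependence of $H_b$, which produces $e_2$), while verifying that in both places only the degree-two defects indexed by $(2,1^{\ast})$ survive to first order, with exactly the weights $a\binom{b}{2}$ and $\binom{a}{2}$. The $q\mapsto q^k$ convention flagged in the footnote must be respected throughout; it is benign precisely at $q=1$ (because $1^k=1$), which is also why evaluation there commutes with plethysm.
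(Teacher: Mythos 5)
Your proposal is correct and matches the paper's (unwritten) intent exactly: the paper's proof consists of the single sentence that the identity is ``a straightforward calculation using rules \pref{plethrules} and \pref{defhnpower}'', and your argument is precisely that calculation carried out in detail --- expanding in the power-sum basis via \pref{defhnpower}, noting that only $\mu=(1^n)$ and $\mu=(2,1^{n-2})$ survive to first order at $q=1$, and separating the outer and inner $q$-variations via the product rule. The identifications $h_2+e_2=p_1^2$, $h_2-e_2=p_2$, and hence $\mathcal{O}_b=\tfrac12(p_1^{2b}-p_2^b)$, together with the computed weights $a\binom{b}{2}$ and $\binom{a}{2}$, all check out.
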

\begin{proof}[\bf Proof]
    This is a straightforward calculation using rules~\pref{plethrules} and~\pref{defhnpower}.   \end{proof}
It immediately follows that we have the following formula.
 \begin{prop}
   For any $a>1$ and $b>a$, we have
\begin{equation}
   F_{a,b}(\mbf{x};1)=
   \frac{1}{2}\left(ab\,( b-a) \,h_1^{ab-2}\,e_2
   +a \left( a-1\right) h_1^{(a-2)b}\,\mathcal{O}_b
   -b \left( b-1 \right) h_1^{a(b-2)}\,\mathcal{O}_a\right).
\end{equation}
Moreover, this a positive integer coefficient polynomial in $h_1$, $h_2$ and $e_2$; hence, it expands positively in the Schur basis.
\end{prop}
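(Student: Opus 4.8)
The plan is to read off the closed formula from the divided-difference evaluation \pref{formuledelH}, and then to prove positivity by eliminating $h_1$ through the single algebraic relation $h_1^2=h_2+e_2$.

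\emph{Step 1 (the formula).} Since $H_a[H_b]$ and $H_b[H_a]$ both specialize to $p_1^{ab}=h_1^{ab}$ at $q=1$, I would first split the divided difference as
\[
 F_{a,b}(\mbf{x};1)=\lim_{q\to1}\frac{h_1^{ab}-H_a[H_b]}{1-q}-\lim_{q\to1}\frac{h_1^{ab}-H_b[H_a]}{1-q},
\]
both limits existing because each numerator vanishes at $q=1$. Applying formula \pref{formuledelH} to the first limit with the pair $(a,b)$, and to the second with the roles of $a$ and $b$ exchanged, and then collecting the two $h_1^{ab-2}e_2$ contributions gives the coefficient $a\sbinom{b}{2}-b\sbinom{a}{2}=\tfrac12 ab(b-a)$, while the $\mathcal O_b$ and $\mathcal O_a$ terms are carried over intact. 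This is exactly the stated expression, and the step is routine.

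\emph{Step 2 (reduction of positivity).} The three summands lie in the subalgebra generated by $h_1,h_2,e_2$, whose only relation is $h_1^2=h_2+e_2$; note that $\mathcal O_m=\tfrac12((h_2+e_2)^m-(h_2-e_2)^m)=\sum_{k\ \mathrm{odd}}\binom{m}{k}h_2^{m-k}e_2^k$ already has non-negative coefficients in $h_2,e_2$. The $h_1$-exponents $ab-2$, $(a-2)b$, $a(b-2)$ all share the parity of $ab$, and since $b>a$ the smallest is $(a-2)b$. Factoring out $h_1^{(a-2)b}=h_1^{\epsilon}(h_2+e_2)^{m}$, where $\epsilon\in\{0,1\}$ is the parity of $ab$ and $m=\lfloor (a-2)b/2\rfloor$, I would rewrite
\[
 F_{a,b}(\mbf{x};1)=h_1^{\epsilon}(h_2+e_2)^{m}\,\Psi_{a,b},\qquad
 \Psi_{a,b}:=\tfrac12 ab(b-a)(h_2+e_2)^{b-1}e_2+\sbinom{a}{2}\mathcal O_b-\sbinom{b}{2}(h_2+e_2)^{b-a}\mathcal O_a .
\]
Since $h_1$, $(h_2+e_2)$, $h_2$, $e_2$ are all Schur-positive, and since $\tfrac12 ab(b-a)$ is an integer (one of $a$, $b$, $b-a$ being even), everything reduces to showing that $\Psi_{a,b}$, homogeneous of degree $b$ in the letters $h_2,e_2$, has non-negative integer coefficients.

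\emph{Step 3 (the crux).} Writing $c_k:=[h_2^{b-k}e_2^k]\,\Psi_{a,b}$, the task is $c_k\ge 0$ for $0\le k\le b$ (one checks $c_0=0$). Using $\binom{b-1}{k-1}=\tfrac{k}{b}\binom{b}{k}$ together with Vandermonde in the form $\sum_{j\ \mathrm{odd}}\binom{a}{j}\binom{b-a}{k-j}=\tfrac12\bigl(\binom{b}{k}-[x^k](1-x)^a(1+x)^{b-a}\bigr)$, this becomes the explicit binomial inequality
\[
 \tfrac12 ab(b-a)\binom{b-1}{k-1}+\delta\,\sbinom{a}{2}\binom{b}{k}\ \ge\ \sbinom{b}{2}\sum_{j\ \mathrm{odd}}\binom{a}{j}\binom{b-a}{k-j},
\]
with $\delta=1$ for $k$ odd and $\delta=0$ otherwise. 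This is the main obstacle. It cannot be settled by crudely dominating the subtracted $\mathcal O_a$-term, because the inequality is frequently tight — for instance $c_1=c_2=0$ already for $(a,b)=(2,5)$ — so the $(h_2+e_2)^{b-1}e_2$ contribution and the $\mathcal O_b$ contribution must be combined. I would attack it through a careful estimate of the alternating sum $[x^k](1-x)^a(1+x)^{b-a}$, or, more ambitiously, by producing a representation-theoretic model of $\Psi_{a,b}$ rendering the $c_k$ manifestly non-negative.

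\emph{Step 4 (conclusion).} Non-negativity of the $c_k$ exhibits $\Psi_{a,b}$, hence $F_{a,b}(\mbf{x};1)$, as a non-negative integer combination of the monomials $h_2^{\,j}e_2^{\,k}=s_2^{\,j}s_{11}^{\,k}$ (times the extra factor $h_1=s_1$ when $ab$ is odd); integrality of the coefficients follows since these products are linearly independent with an integral transition to the Schur basis. Finally, because $h_1=s_1$, $h_2=s_2$, $e_2=s_{11}$ and products of Schur functions expand with non-negative Littlewood--Richardson coefficients, the expression is Schur-positive, which is the last assertion of the proposition.
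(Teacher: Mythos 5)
Your Step 1 is correct and is exactly how the paper obtains the closed formula: write $F_{a,b}(\mbf{x};1)$ as the difference of the two limits $\lim_{q\to1}(h_1^{ab}-H_a[H_b])/(1-q)$ and its $(a\leftrightarrow b)$-swap, apply \pref{formuledelH} twice, and combine $a\sbinom{b}{2}-b\sbinom{a}{2}=\tfrac12ab(b-a)$. Your Steps 2 and 4 are also sound reductions: the exponent bookkeeping ($(a-2)b$ is the smallest exponent, all three share the parity of $ab$, $h_1^2=h_2+e_2$), the integrality of $\tfrac12ab(b-a)$, and the passage from $\N$-positivity in $h_1,h_2,e_2$ to Schur positivity via $h_1=s_1$, $h_2=s_2$, $e_2=s_{11}$ are all correct.

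The genuine gap is Step 3, and it is not a small one: the entire content of the second assertion of the proposition is the non-negativity of your coefficients $c_k$, and you reduce it to an explicit binomial inequality which you then do not prove, only propose to ``attack.'' Your own observation that the inequality is tight (e.g.\ $c_1=c_2=0$ for $(a,b)=(2,5)$) shows that no term-by-term domination will work, so the reduction, while correct, leaves the hard part entirely open. The paper does not take this closed-form route at all: it establishes positivity by induction on $b$, via the recursion of Proposition~\ref{proprec}, namely $F_{a,b+1}(\mbf{x};1)=h_1^a F_{a,b}(\mbf{x};1)+2h_1^{(a-2)b}\Theta_a(b)$ with $\Theta_a(b)$ determined by a three-term linear recurrence in $b$; after the substitution $z=e_2/h_2$ this becomes the one-variable recurrence of Appendix~B, whose solution is asserted there to have coefficients in $\N$. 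If you want to complete your argument, the inductive route is the one to follow (or, alternatively, you must actually establish your binomial inequality, e.g.\ by controlling the alternating sum $[x^k](1-x)^a(1+x)^{b-a}$); as written, the positivity claim remains unproved.
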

 We also have the following recursive approach to the calculation of $F_{a,b}$, as a polynomial in $h_1$, $h_2$ and $e_2$.
\begin{prop}\label{proprec}
 \begin{equation}
   F_{a,b+1}(\mbf{x};1)=h_1^a\, F_{a,b}(\mbf{x};1)+    
                                         2\,h_1^{(a-2) b}\,\Theta_a(b),
\end{equation}
with $ \Theta_a(b)$ satisfying the recurrence 
$$ \Theta_a(b) = (3\,h_2+e_2)\,\Theta_a(b-1)-h_1^2\,(3\,h_2-e_2)\,\Theta_a(b-2)+h_1^4\,(h_2-e_2)\,\Theta_a(b-3),$$
with initial conditions: $\Theta_a(a) = \Theta_{a-1}(a)$, $\Theta_a(a+1) =({a^2e_2}/{2})\, \mathcal{E}_a+({a\,h_2}/{2})\,\mathcal{O}_a$, and
$$ \Theta_a(a+2) = 
\frac{(  \left( a+1 \right) ^{2}-2 )\,e_2}{2} \, \mathcal{E}_{{a+1}}
-\frac{(a+1)\,h_2}{2}\,\mathcal{O}_{{a+1}}+e_2\, ( a\,e_{{2}}+h_{{2}} )\, ( h_2-e_2 )^a.
$$
Moreover, $\Theta_a(b)$ lies in $\N[h_1,h_2,e_2]$.
 \end{prop}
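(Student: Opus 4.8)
The plan is to derive everything from the closed form for $F_{a,b}(\mbf{x};1)$ in the preceding Proposition, together with the single structural identity $h_1^2=h_2+e_2$ (equivalently $p_1^2=h_2+e_2$ and $p_2=h_2-e_2$). Setting $u:=h_2+e_2=h_1^2$ and $v:=h_2-e_2$, one has $\mathcal{E}_b=(u^b+v^b)/2$, $\mathcal{O}_b=(u^b-v^b)/2$, together with the elementary relations $\mathcal{O}_{b+1}=h_2\,\mathcal{O}_b+e_2\,\mathcal{E}_b$ and $\mathcal{E}_b-\mathcal{O}_b=v^b$. First I would substitute $b\mapsto b+1$ into the closed form and subtract $h_1^a\,F_{a,b}(\mbf{x};1)$. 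Collecting the $e_2$-term, the $\mathcal{O}_a$-term, and the $\mathcal{O}_{b+1}/\mathcal{O}_b$-pair separately, the binomial prefactors telescope (for instance $a(b+1)(b+1-a)-ab(b-a)=a(2b+1-a)$), while the combination $\mathcal{O}_{b+1}-h_1^2\,\mathcal{O}_b$ collapses, using $h_2-h_1^2=-e_2$ and the two relations above, to $e_2\,v^b$. This simultaneously exhibits the common factor $h_1^{(a-2)b}$ asserted in the statement and produces an explicit closed form for $\Theta_a(b)$ whose terms are fixed powers of $h_1$ times, respectively, $e_2$, $\mathcal{O}_a$, and $e_2\,(h_2-e_2)^b$; I would record this form for the next step.

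Next I would establish the three-step linear recurrence. The key observation is that, read as a sequence in $b$ with $a$ fixed, the explicit $\Theta_a(b)$ is a combination of $u^b$, $b\,u^b$ and $v^b$ with coefficients \emph{independent} of $b$; the mode $b\,u^b$ appears because the $e_2$-coefficient of $F$ is quadratic in $b$, so that the relevant difference is linear. Any such sequence is annihilated by the operator whose characteristic polynomial is $(t-u)^2(t-v)$, and expanding this and substituting $u=h_2+e_2=h_1^2$, $v=h_2-e_2$ gives
\begin{equation*}
(t-u)^2(t-v)=t^3-(3h_2+e_2)\,t^2+h_1^2(3h_2-e_2)\,t-h_1^4(h_2-e_2),
\end{equation*}
which is exactly the stated recurrence, the double root at $u=h_1^2$ reflecting the $b\,u^b$ mode. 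The initial data $\Theta_a(a)$, $\Theta_a(a+1)$, $\Theta_a(a+2)$ are then obtained by evaluating the closed form at $b=a,a+1,a+2$ (with $F_{a,a}(\mbf{x};1)=0$ pinning down the first), after rewriting the surviving powers of $u$ and $v$ back in terms of $\mathcal{E}_\bullet$, $\mathcal{O}_\bullet$ and $(h_2-e_2)^a$.

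The genuine obstacle is the positivity statement $\Theta_a(b)\in\N[h_1,h_2,e_2]$. It is not visible from the closed form, nor can it be propagated naively through the recurrence: both contain manifestly negative contributions, namely the middle coefficient $-h_1^2(3h_2-e_2)$ and the factor $v^b=(h_2-e_2)^b$, which is itself sign-alternating in the basis $h_1,h_2,e_2$. My approach would be to prove positivity directly, by expanding $\Theta_a(b)$ in the monomial basis $h_1^{i}h_2^{j}e_2^{k}$ and showing each coefficient is a nonnegative integer: I would collect the three explicit pieces over a common power of $h_1$, use $h_1^2=h_2+e_2$ to clear the remaining even powers, and check that the negative contributions coming from the $(h_2-e_2)^b$ expansions are dominated term by term by the positive ones. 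A cleaner alternative, which I would try first, is to look for a manifestly positive rewriting of $\Theta_a(b)$ (an $\N[h_1,h_2,e_2]$-expression from which the recurrence and the initial conditions then follow), since the preceding Proposition already supplies such a positive form for $F_{a,b}(\mbf{x};1)$ and $\Theta_a(b)$ is in essence its discrete derivative in $b$.
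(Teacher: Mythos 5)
Your derivation of the recursion and of the three-term recurrence is correct, and it actually supplies more than the paper does: Proposition \ref{proprec} is stated there with no derivation at all. The computational facts you rely on check out: with $u=h_2+e_2=h_1^2$ and $v=h_2-e_2=p_2$ one has $\mathcal{O}_{b+1}-h_1^2\,\mathcal{O}_b=e_2\,v^{b}$, the quadratic prefactors telescope to $a(2b+1-a)$, the three surviving terms share the factor $h_1^{(a-2)b}$, and $\Theta_a(b)$ is a $b$-independent combination of $u^b$, $b\,u^b$ and $v^b$, hence is annihilated by $(E-u)^2(E-v)$, whose expansion is exactly the stated recurrence. One caveat on your last step: if you actually evaluate your closed form at $b=a,a+1,a+2$ you will \emph{not} recover the initial conditions as printed (already for $a=2$ the degrees disagree, and the printed $\Theta_a(a+2)$ coincides with what your closed form assigns to $b=a+1$); the statement appears to carry an index shift and/or missing powers of $h_1$, so this step should be presented as a correction of the printed initial data rather than a verification of it.

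The genuine gap is the one you name yourself: the claim $\Theta_a(b)\in\N[h_1,h_2,e_2]$. Neither of your two strategies is carried out, and the first is not even well posed as stated: since $e_2=h_1^2-h_2$, the monomials $h_1^ih_2^je_2^k$ are not linearly independent, so ``expanding in the monomial basis and checking each coefficient'' requires first fixing a representative. After factoring out $h_1^{a-2}$ and converting the remaining even powers of $h_1$ into powers of $u$, the claim reduces to a concrete family of binomial-coefficient inequalities, roughly $\frac{a(2b+1-a)}{2}\binom{b}{k-1}+\frac{a(a-1)}{2}(-1)^{k-1}\binom{b}{k-1}\geq b\sum_{j\ \mathrm{odd}}\binom{a}{j}\binom{b-a+1}{k-j}$ for each $k$, which is plausible but needs an actual argument that you do not give. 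For comparison, the paper's own route to positivity (Appendix B) is different from either of your proposals: it dehomogenizes via $\Theta_a(b)=h_2^b\,\theta_{b-a}(e_2/h_2;a)$, turning everything into a one-variable recurrence in $z=e_2/h_2$, and then merely asserts that the resulting $\theta_n(z;a)$ have nonnegative coefficients. So on the only hard point of the Proposition your proposal and the paper are in the same incomplete state; your write-up at least isolates the difficulty honestly, but as it stands it does not close it.
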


\noindent
Using one of these calculation techniques, we find that
$$\begin{array}{lll}
F_{{2,3}}=4\,e_2^{3}, &
F_{{2,4}} =8\,e_2^{3}\, ( e_2+2\,h_2 )\\[6pt]& 
F_{{2,5}} =8\,e_2^{3}\,( 2\,e_2^{2}+5\,h_2\,e_2+5\,h_2^{2} ), \\[8pt]
F_{{3,4}}=24\,h_1^{4}\,e_2^{3}\,h_2 ,&
F_{{3,5}}=8\,h_1^{5}\,e_2^{3}\, ( e_2^{2}+5\,h_2\,e_2+10\,h_2^{2}), \\[6pt]&
F_{{3,6}}=12\,h_1^{6}\,e_2^{3}\, ( e_2^{3}+9\,e_2^{2}\,h_2+15\,e_2\,h_2^{2}+15\,h_2^{3} ), \\[8pt]
F_{{4,5}}=16\,h_1^{10}\,e_2^{3}\, ( e_2^{2}+5\,h_2^{2} ), &
F_{{4,6}}=24\,h_1^{12}\,e_2^{3}\, ( e_2^{3}+4\,e_2^{2}\,h_2+5\,e_2\,h_2^{2}+10\,h_2^{3}), \\[6pt]&
F_{{4,7}}=24\,h_1^{14}\,e_2^{3}\, ( 2\,e_2^{4}+7\,e_2^{3}\,h_2+21\,e_2^{2}\,h_2^{2}+21\,e_2\,h_2^{3}+21\,h_2^{4} ) .
\end{array} $$
In particular, for all $b>a>1$, we have.
   $$F_{a,b+1}(\mbf{x};1)-h_1^a F_{a,b}(\mbf{x};1)\in \N[h_1,h_2,e_2],$$
which implies the analog at $q=1$ of the stability portion of Foulkes conjecture, namely
\begin{prop}
    For all $a<b$, and all partition $\lambda$, we have
         $$  \langle\overline{F_{a,b+1}(\mbf{x};1)}-\overline{F_{a,b}(\mbf{x};1)},s_\lambda(\mbf{x})\rangle\in \N.$$
\end{prop}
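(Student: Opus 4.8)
The plan is to reduce the claimed Schur positivity to two separate, more tractable positivity statements, using the recursion already available at $q=1$. Write $f:=F_{a,b}(\mbf{x};1)$ and $G:=F_{a,b+1}(\mbf{x};1)-h_1^a\,f$. The preceding results give $f\in\N[h_1,h_2,e_2]$ and, for $b>a>1$, also $G\in\N[h_1,h_2,e_2]$. Since the ``Bar'' operator is linear, I would write
\[
  \overline{F_{a,b+1}(\mbf{x};1)}-\overline{F_{a,b}(\mbf{x};1)}
   =\big(\overline{h_1^a\,f}-\overline{f}\big)+\overline{G},
\]
so that it suffices to prove each summand on the right is Schur positive; the scalar product with any $s_\lambda$ is then a nonnegative integer, as desired. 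The case $a=1$ is degenerate and handled separately: since $H_1=p_1$ is neutral for plethysm on either side, $F_{1,b}=0$, and the statement is vacuous. Hence I assume $b>a>1$ throughout, which is precisely the range where the recursion holds.

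I would first observe that the Bar operator preserves Schur positivity, because by definition it sends each basis element $s_\mu$ to the single Schur function $s_{\overline{\mu}}$, and so carries nonnegative Schur expansions to nonnegative Schur expansions. As $h_1=s_1$, $h_2=s_2$ and $e_2=s_{11}$ are themselves Schur functions, every monomial in $\N[h_1,h_2,e_2]$ is a product of Schur functions, hence Schur positive by the Littlewood--Richardson rule. In particular $G$ is Schur positive, and therefore so is $\overline{G}$; this disposes of the second summand.

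The core of the argument is the first summand, which requires understanding how the Bar operator interacts with multiplication by $h_1$. By Pieri's rule $s_1\,s_\mu=\sum_\nu s_\nu$, the sum ranging over all shapes $\nu$ obtained from $\mu$ by adding a single box. Adding the box in the first row yields $\overline{\nu}=\overline{\mu}$, while any legal addition in a lower row $i\ge 2$ keeps $\mu_1$ as the largest part (a box can be added in row $i$ only when $\mu_{i-1}>\mu_i$, forcing $\mu_i+1\le\mu_1$), so that $\overline{\nu}$ is $\overline{\mu}$ with one box added. Thus
\[
  \overline{h_1\,s_\mu}-\overline{s_\mu}=\sum_{i\ge 2}s_{\overline{\nu}}
\]
is a sum of distinct Schur functions, hence Schur positive. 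Summing against nonnegative coefficients, $\overline{h_1\,g}-\overline{g}$ is Schur positive for every Schur positive $g$. To pass to the power $h_1^a$, I would telescope
\[
  \overline{h_1^a\,f}-\overline{f}
   =\sum_{k=1}^{a}\Big(\overline{h_1\,(h_1^{k-1}f)}-\overline{h_1^{k-1}f}\Big),
\]
each bracketed term being Schur positive because $h_1^{k-1}f\in\N[h_1,h_2,e_2]$ is Schur positive. This yields the positivity of the first summand and completes the proof.

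The step I expect to be the main obstacle is the Pieri bookkeeping underlying $\overline{h_1\,s_\mu}-\overline{s_\mu}$: one must check carefully that a box added below the first row never overtakes $\mu_1$, so that removing the largest part genuinely commutes with the box addition on the shifted shape, and that the term $s_{\overline{\mu}}$ arises with coefficient exactly one (only from the first-row addition). Once this local computation is secured, the telescoping and the positivity of $\overline{G}$ are routine consequences.
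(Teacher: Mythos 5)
Your proof is correct and follows essentially the same route as the paper's: the same two-term decomposition through $h_1^a\,F_{a,b}(\mbf{x};1)$, with the first piece handled by the recursion placing $F_{a,b+1}(\mbf{x};1)-h_1^a F_{a,b}(\mbf{x};1)$ in $\N[h_1,h_2,e_2]$ and the second by the Pieri rule. Your single-box telescoping (in place of the paper's direct observation that $s_{\lambda+(a)}$ occurs in $h_1^a s_\lambda$) and your explicit treatment of the degenerate case $a=1$, where $F_{1,b}=0$, merely flesh out details the paper leaves implicit.
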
 
\begin{proof}[\bf Proof]
 Indeed, using the classical Pieri rule for the calculation of $h_1\, s_\lambda$, it is easy to see that
     $$\overline{h_1^a F_{a,b}(\mbf{x};1))}-\overline{F_{a,b}(\mbf{x};1)}$$
   is Schur positive, since one of the terms in $h_1^a s_\lambda$ is the Schur function indexed by the partition obtained from $\lambda$ by adding $a$ boxes to its first line. Hence the lemma directly implies that
      $$\overline{F_{a,b+1}(\mbf{x};1)}-\overline{F_{a,b}(\mbf{x};1)}=
          (\overline{F_{a,b+1}(\mbf{x};1)}-\overline{h_1^a F_{a,b}(\mbf{x};1)})+
          (\overline{h_1^a F_{a,b}(\mbf{x};1))}-\overline{F_{a,b}(\mbf{x};1)})$$
   is  Schur positive.
\end{proof}
It is interesting to calculate how $F_{a,b}(\mbf{x};q)$ expands explicitly as a polynomial in $q$. Indeed, by a direct calculation, one gets
    $$F_{a,b}(\mbf{x};q) = (h_b[h_a]-h_a[h_b])+
          (h_{b-1}[h_a]\,h_{a-1}\,h_1 - h_{a-1}[h_b]\,h_{b-1}\,h_1)q+\ldots$$
 with other more intricate terms. Hence, the conjectured Schur-positivity of $F_{a,b}(\mbf{x};q)$ implies that we have Schur positivity of
      $$ ((h_{b-1}[h_a])\cdot h_{a-1} -( h_{a-1}[h_b])\cdot h_{b-1})\cdot h_1.$$
Experiments suggest that we  in fact have Schur positivity of    
\begin{equation}
    (h_{b-1}[h_a])\cdot h_{a-1} -( h_{a-1}[h_b])\cdot h_{b-1},
 \end{equation}
 for all $a<b$,  which would immediately imply the above.
                                       
\section{A first extension}\label{extensions}
In her thesis, partly presented in~\cite{vessenes},  Vessenes attributes\footnote{However, it is not clear in the first paper cited where this exact statement can be found in~\cite{doran}.} the following generalization of Foulkes conjecture to Doran~\cite{doran}, and a similar extension is considered (and proved in a special case) in~\cite{Abdesselam}.  
 For $a$ and $b$, let $c$ be a divisor of $n:=ab$, lying between $a$ and $b$, and set $d=n/c$.
 Then the generalized conjecture of Doran states that
    \begin{equation}\label{doran}
         h_c[h_d]-h_a[h_b]\in \N[s_\mu\ |\ \mu\vdash n].
    \end{equation}
This is to say that this difference is Schur positive. 
For example, one calculates that
    $$h_3[h_4]-h_2[h_6]=s_{{93}}+s_{{444}}+s_{{642}}+s_{{741}}+s_{{822}}.$$
Our experiments suggest that this extends to our $q$-context, in a manner that is compatible with our previous discussion.
\begin{conj}\label{conjqDoran} Let $c$ be a divisor of $n:=ab$, with $a\leq c\leq b$, and set $d=n/c$. Then the divided difference
\begin{equation}\label{conjqDoranformule}
   0\preceq_s \frac{H_c[H_d]-H_a[H_b]}{1-q}.
  \end{equation}
In other words, its expansion in the Schur basis has polynomial coefficients in $q$, with coefficients in $\N$.
\end{conj}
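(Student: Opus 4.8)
The plan is to lift the statement to a question about graded $\S_n$-modules and reduce the Schur-positivity to the existence of a single grading-compatible equivariant map. \emph{Well-posedness and reformulation.} First, exactly as in the $q=1$ discussion of $F_{a,b}$, the divided difference is well defined: since $H_m(\mbf{x};1)=p_1^m$ and evaluation at $q=1$ commutes with plethysm, both $H_c[H_d](\mbf{x};1)$ and $H_a[H_b](\mbf{x};1)$ equal $p_1^{n}$, so $H_c[H_d]-H_a[H_b]$ vanishes at $q=1$ and is divisible by $1-q$. The reason division is the right operation is the following elementary remark: fix $\lambda\vdash n$ and set $p(q)=\scalar{H_c[H_d]-H_a[H_b]}{s_\lambda}=\sum_k p_k\,q^k$; then $p(1)=0$ and
\[ \frac{p(q)}{1-q}=\sum_{m}\Big(\sum_{k\le m}p_k\Big)\,q^m . \]
Thus Conjecture~\ref{conjqDoran} is \emph{equivalent} to the assertion that, for every $\lambda$, the partial sums of the $q$-coefficients of $\scalar{H_c[H_d]}{s_\lambda}$ dominate those of $\scalar{H_a[H_b]}{s_\lambda}$.

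\emph{Passage to modules.} Recall that $H_m$ is the graded Frobenius characteristic $\mathrm{grFrob}(R_m)$ of the coinvariant algebra $R_m$ of $\S_m$ (the $q$-grading matching the charge statistic in $K_\mu(q)$). Because the plethysm of graded Frobenius characteristics of genuine modules is again one, each $H_e[H_{n/e}]$ is the graded Frobenius characteristic of an explicit graded $\S_n$-module $M_e$, namely the plethystic composition of the inner factor $R_{n/e}$ with the outer $\S_e$-structure of $R_e$. In these terms $q$-Doran would follow from the existence of an $\S_n$-equivariant, degree-non-increasing injection $M_a\hookrightarrow M_c$, i.e. one sending each homogeneous degree-$k$ component into $F_k M_c:=\bigoplus_{j\le k}(M_c)_j$. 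Indeed such a map gives $\dim_\lambda F_k M_a\le \dim_\lambda F_k M_c$ for all $k$ and all $\lambda$, which is precisely the partial-sum inequality above.

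\emph{Reduction to adjacent divisors.} Writing $\mathcal P_e:=H_e[H_{n/e}]$, the conjecture is the monotonicity $\frac{\mathcal P_c-\mathcal P_{c'}}{1-q}\succeq_s 0$ for all divisors $c'\le c$ of $n$ with $c\le n/c'$ (take $c'=a$). Telescoping along the divisors of $n$ between $c'$ and $c$ — the range constraint $e\le n/e'$ is preserved at each consecutive step — it suffices to construct $M_{c'}\hookrightarrow M_c$ when $c'$ and $c$ are \emph{consecutive} divisors. Here I would try to realize the map as a $q$-deformation of the classical sorting/symmetrization comparison used in the $q\to 0$ (Foulkes/Brion) setting: a re-blocking of the $n$ underlying points that converts the outer $\S_{c'}$-structure into the $\S_c$-structure, with the $q$-grading carried by the inner coinvariant factors. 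The two requirements are $\S_n$-equivariance and the degree-non-increasing property.

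\emph{Main obstacle.} The crux is constructing this map, and the difficulty cannot be dodged at $q=0$: restricting a degree-non-increasing equivariant injection to its lowest ($q^0$) graded piece yields an ordinary $\GL(V)$-monomorphism realizing classical Doran~\pref{doran}, which is itself open in general, so any complete proof of $q$-Doran in particular proves the classical statement. What the $q$-refinement genuinely buys is extra room — a degree-$k$ generator may be sent to strictly higher degree — so the map need not respect the grading strictly and the problem acquires a filtered, inductive shape (extend the injection from $F_{k-1}$ to $F_k$) that is absent in the ungraded case. I would attack this extension step using the Pieri-type branching of the inner factor $H_{n/e}$ together with a stability input of Brion/Manivel type, but I expect the explicit, provably degree-non-increasing equivariant injection to remain the principal obstacle, precisely because it subsumes the still-unresolved classical conjecture.
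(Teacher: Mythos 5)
The statement you set out to prove is one of the paper's \emph{conjectures}: the paper offers no proof of \pref{conjqDoranformule}, only supporting evidence --- the well-definedness of the division by $1-q$ (your first paragraph reproduces exactly the paper's Section 2 argument that $H_m(\mbf{x};1)=p_1^m$ and evaluation at $q=1$ commutes with plethysm), a closed formula via \pref{formuledelH} showing that the $q=1$ specialization lies in $\N[h_1,h_2,e_2]$, and machine checks. Your proposal, as you candidly admit, also stops short of a proof, since your ``main obstacle'' subsumes the still-open classical Doran statement \pref{doran}; that admission is honest and consistent with the status of the statement. Your partial-sum reformulation of division by $1-q$ is correct, the filtered-injection sufficiency argument is sound as far as it goes, and the module-theoretic framing echoes the paper's own remark that Schur positivity with coefficients in $\N[q]$ guarantees (by general principles) the existence of \emph{some} graded module. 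Note also that the paper's Conjecture \ref{conjstrong} proposes precisely the opposite strategy to yours: reducing the $q$-positivity to the $q=0$ positivity plus the $q=1$ identity, rather than attacking the graded problem directly.

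There is, however, one concrete step you assert that is false: the reduction to consecutive divisors. For a pair $(e',e)$ with $e'\le e$ to be an instance of the conjecture one needs $e'e\le n$ (this is the condition $e\le n/e'$), and this constraint is \emph{not} inherited by adjacent pairs in the divisor chain. Take $n=12$ and the valid target pair $(a,c)=(2,6)$ (the Foulkes case, since here $b=6=c$). The divisors of $12$ between $2$ and $6$ are $2,3,4,6$, so your telescoping forces the step $(4,6)$, for which $4\cdot 6=24>12$. This is not an instance of the conjecture, and the inequality it would require at $q=0$, namely $h_4[h_3]\preceq_s h_6[h_2]$, is outright false: by the paper's dimension count \pref{partionab},
\begin{equation*}
\dim\bigl(h_6[h_2]\bigr)=\frac{12!}{6!\,(2!)^6}=10395 \;<\; 15400=\frac{12!}{4!\,(3!)^4}=\dim\bigl(h_4[h_3]\bigr),
\end{equation*}
so $h_6[h_2]-h_4[h_3]$ cannot be Schur positive, let alone its $q$-refinement. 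Hence the family $\mathcal P_e=H_e[H_{n/e}]$ is not monotone along the divisor chain, the comparability relation is only a partial order governed by $e'e\le n$, and any proof along your lines must construct the injection $M_a\hookrightarrow M_c$ for non-adjacent divisor pairs directly --- which compounds, rather than simplifies, the obstacle you already identified.
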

Clearly, $c$ may be chosen to be equal to $b$, thus getting our previous Conjecture-\ref{conjqFoulkes}.
Once again, using Formula~\pref{formuledelH}, we can calculate how this specializes at $q=1$. Setting $n:=ab=cd$, we get
\begin{eqnarray}
   \lim_{q\to1} \frac{H_c[H_d]-H_a[H_b]}{1-q}&=& \nonumber\\
    &&\hskip-.5in \frac{1}{2}\left(n\,( b-d) \,h_1^{n-2}\,e_2
   +a\, ( a-1)\, h_1^{n-2b}\,\mathcal{O}_b
   -c\, ( c-1 )\, h_1^{n-2d}\,\mathcal{O}_d\right).
\end{eqnarray} 
and again the resulting symmetric function lies in $\N[h_1,h_2,e_2]$.
For instance, we have 
    $$\lim_{q\to 1}\frac{H_3[H_4]-H_2[H_6]}{1-q}\,=
    2\,e_{{2}} \left( 6\,e_2^{5}+27\,e_2^{4}h_{{2}}+48\,e_2
^{3}h_2^{2}+58\,e_2^{2}h_2^{3}+18\,e_{{2}}h_2^
{4}+3\,h_2^{5} \right).$$

\section{Expanding Foulkes conjecture to more general diagrams}
 For partitions $\alpha$, $\beta$, $\gamma$, and $\delta$, none of which equal to $(1)$ and such that $|\alpha|\cdot |\beta|=|\gamma|\cdot |\delta|=n$, let us say that $\foulkes{\alpha}{\beta}{\gamma}{\delta}$ is a \define{Foulkes configuration} for $n$,  if and only if
\begin{equation}\label{Fconf}
   s_\alpha[s_\beta]\  \prec_s\  s_\gamma[s_\delta].
\end{equation}
Observe that, since we ask for strict inequality, we are excluding the trivial case $(\alpha,\beta)=(\gamma,\delta)$.
Clearly, for $1<a<b$, Foulkes conjecture says that $\foulkes{a}{b}{b}{a}$ is a Foulkes configuration. Likewise statement~\pref{doran}, under the conditions there specified, is equivalent to saying that $\foulkes{a}{b}{c}{d}$ is a Foulkes configuration. Clearly, there are no Foulkes configurations for $n$ prime, and one may check by direct calculations that there are none for $n=4$ and $n=9$.
Again by direct explicit calculation of all possible cases, we get that the only Foulkes configurations for $n=6$ (the smallest non-trivial situation) are
  $$\foulkes2332,\qquad
\foulkes{11}{111}{3}{11},\qquad
\foulkes{111}{2}{11}{21},\qquad{\rm and}\qquad
\foulkes{111}{11}{2}{21};
$$
for $n=8$,  the $14$ configurations:
$$\begin{array}{lllll}
\foulkes{2}{4}{4}{2},&
\foulkes{2}{1111}{4}{11},&
\foulkes{11}{4}{31}{2},&
\foulkes{11}{22}{31}{2},\\[6pt]
\foulkes{11}{22}{31}{11},&
\foulkes{11}{31}{211}{2},&
\foulkes{11}{211}{211}{11},&
\foulkes{11}{1111}{31}{11},\\[6pt]
\foulkes{22}{2}{2}{31},&
\foulkes{22}{11}{2}{211},&
\foulkes{211}{2}{11}{31},&
\foulkes{211}{11}{11}{211},\\[6pt]
\foulkes{1111}{2}{2}{31},&
\foulkes{1111}{11}{2}{211};
\end{array}$$
and for $n=10$, the $8$ configurations:
$$\begin{array}{lllll}
\foulkes{2}{5}{5}{2},&
\foulkes{2}{221}{311}{11},&
\foulkes{2}{2111}{311}{11},&
\foulkes{11}{32}{311}{2},\\[6pt]
\foulkes{11}{41}{311}{2},&
\foulkes{11}{11111}{5}{11},&
\foulkes{11111}{2}{2}{311},&
\foulkes{11111}{11}{11}{311}.
\end{array}$$
For  $n$ up to $16$, Table-\ref{table_foulkes} gives the number of Foulkes configurations for $n$.
\begin{table}[ht]
$$\begin {array}{|c|c|c|c|c|c|c|c|c|c|c|c|c|c|c|c|c|} 
\hline
n & 1 & 2 & 3 & 4 & 5 & 6 & 7 & 8 & 9 &10 &11 &12 &13 &14 &15&16\\
\hline
\#\ {\rm Config}& 0& 0& 0& 0&0&4&0& 14& 0&8& 0& 110&0&24&17&221\\
\hline
\end {array}$$
\caption{Number of Foulkes configurations for $n$.}\label{table_foulkes}
\end{table}
Thus, there seems to exist an abundance of such. As we will see, the picture becomes much sharper when we consider the $q$-setup.

Let us now consider the following\footnote{This is well-known in the theory of Macdonald polynomials, and all properties also mentioned.} $q$-analog of Schur functions:
    $$S_\mu(\mbf{x};q):=\omega\, q^{n(\mu')}\,H_\mu(\mbf{x};1/q,0),$$
defined in terms of specialization at $t=0$ of the combinatorial Macdonald polynomials\footnote{See Appendix A for various notations used here.} $H_\mu(\mbf{x};q,t)$, with $\omega$ standing for the ``usual'' linear involution that sends $s_\mu(\mbf{x})$ to $s_{\mu'}(\mbf{x})$.
 In particular, one may check that $S_\mu(\mbf{x};0)=s_\mu(\mbf{x})$ and $S_\mu(\mbf{x};1)=e_{\mu'}(\mbf{x})$. 
For instance, we have
      $$S_{32}(\mbf{x};q)=s_{{32}}(\mbf{x})+q\,s_{{311}}+q \left( q+1 \right) s_{{221}}(\mbf{x})+{q}^{2} \left( q+1
 \right) s_{{2111}}(\mbf{x})+{q}^{4}s_{{11111}}(\mbf{x}),$$
 and $S_{32}(\mbf{x};1)=e_1(\mbf{x})\,e_2(\mbf{x})^2$.
 Observe that all terms in the Schur expansion of $S_\mu(\mbf{x};q):$ are indexed by partitions that are dominated by $\mu$.
Moreover we get back our previous context for $\mu=(a)$, since $S_{(a)}(\mbf{x};q)=H_a(\mbf{x},q)$.
Under the same assumptions as in~\pref{Fconf} for the partitions involved, we say the we have a \define{$q$-Foulkes configuration} denoted $\qfoulkes{\alpha}{\beta}{\gamma}{\delta}$, if and only if 
 \begin{equation}\label{qfoulkesconfig}
    0\prec_s \frac{S_\gamma[S_\delta]-S_\alpha[S_\beta]}{1-q},
 \end{equation}
with the right-hand side having polynomial coefficients in $q$. 
In particular, this last condition requires that, at $q=1$ we have the equality
   $$ S_\alpha[S_\beta]\Big|_{q=1} = S_\gamma[S_\delta]\Big|_{q=1},$$
 which is equivalent to
\begin{equation}\label{teste}
   e_{\alpha'}[e_{\beta'}] = e_{\gamma'}[e_{\delta'}] .
\end{equation}
 For instance, it is easy to check that this last equality holds when
 \begin{equation}\label{bonnecond}
   \alpha=a,\qquad \beta=\underbrace{bb\cdots b}_k,\qquad \gamma=c,\qquad {\rm and}\qquad
        \delta=\underbrace{dd\cdots d}_k,
   \end{equation}
for any $a,b,c,d,k$ in $\N$, such that $ab=cd$, since both sides of~\pref{teste} evaluate to $e_k^{a+b}$. Evidently, all $q$-Foulkes configurations are also Foulkes configurations, but most Foulkes configurations do not satisfy the extra requirement that~\pref{bonnecond} holds. Explicit calculations reveal that this condition significantly reduces the number of possibilities.

\section{Experimental data for the \texorpdfstring{$q$}{q}-diagram expansion} For $n<12$, the only $q$-Foulkes configurations are those that correspond to Conjecture-\ref{conjqDoran}. For $n=12$, on top of the cases considered in the conjecture in question, there is but one extra $q$-Foulkes configuration, which is $\qfoulkes3{22}2{33}$. For $n=14$ and $n=15$, there is no extra $q$-Foulkes configuration, beside those predicted by Conjecture-\ref{conjqDoran}. For $n=16$, we have the $q$-Foulkes configurations:
    $$\qfoulkes2882,\qquad
        \qfoulkes2844,\qquad
       \qfoulkes2{44}4{22},$$
for $n=18$:
$$\begin{array}{lll}
\qfoulkes2936, \quad& \qfoulkes2963, \quad& \qfoulkes2992, \\[6pt]
\qfoulkes3663,  & \qfoulkes2{333}3{222}, &\qfoulkes2{63}3{42};
\end{array}$$
and for $n=20$:
$$\begin{array}{lll}
\qfoulkes2{55}5{22},\quad& \qfoulkes2{10}45,\quad& \qfoulkes2{10}54, \\[6pt]
\qfoulkes2{10}{10}2, & \qfoulkes4554;
\end{array}$$
hence we get the count of Table-\ref{table_qfoulkes}.
\begin{table}[ht]
$$\begin {array}{|c|c|c|c|c|c|c|c|c|c|c|c|c|c|c|c|c|c|c|c|c|} 
\hline
n & 1 & 2 & 3 & 4 & 5 & 6 & 7 & 8 & 9 &10 &11 &12 &13 &14 &15&16&17&18&19&20\\
\hline
\#\ q\hbox{-Config}& 0& 0& 0& 0&0&1&0& 1& 0&1& 0& 5&0&1&1&3&0&6&0&5\\
\hline
\end {array}$$
\caption{Number of $q$-Foulkes configurations for $n$.}\label{table_qfoulkes}
\end{table}
In view of this, and the fact that~\pref{teste} holds when we have~\pref{bonnecond}, it is tempting to ``guess'' that for any $n=abk=cdk$,  with $2\leq a<c\leq b$, we should have the extra $q$-Foulkes configuration 
\begin{equation}\label{Foulkesabbb}
   \qfoulkes{a}{\underbrace{bb\cdots b}_k}{c}{\underbrace{dd\cdots d}_k},
 \end{equation}
thus explaining almost all cases up to $n=20$. Moreover, this ``guess'' does check out with the configurations:
$$\begin{array}{lll}
   \qfoulkes2{66}6{22},\qquad & \qfoulkes2{66}3{44},\qquad& \qfoulkes2{66}4{33},\\[6pt]
   \qfoulkes3{44}4{33},& \qfoulkes3{55}5{33},&\qfoulkes2{444}4{222},\\[6pt]
   \qfoulkes2{555}5{222},& \qfoulkes2{3333}3{2222},&  \qfoulkes2{33333}3{22222}.
   \end{array}$$
Formulate explicily, in the $q=0$ context, the Schur positivity associated to the configurations~\pref{Foulkesabbb} would correspond to the following nice extension of conjecture~\pref{doran}
   \begin{equation}
       h_c[s_{dd\cdots d}]\preceq_s h_b[s_{aa\cdots a}],
   \end{equation} 
with the same conventions therein, and with $k$ repeated parts in both instances. This led us to consider the similarly constituted inequality
   \begin{equation}
       h_a[h_b^k]\preceq_s h_c[h_d^k],
   \end{equation} 
 which also seems to check out experimentally.
 
Other somewhat similarly flavored $q$-Foulkes configurations are:  
$$\begin{array}{lll}
   \qfoulkes2{63}3{42},\qquad & \qfoulkes2{84}4{42},\qquad & \qfoulkes2{93}3{62},\\
   \qfoulkes2{96}3{64}, &  \qfoulkes2{663}3{442};
   \end{array}$$
while the following ones, who are very close in structure to those above, are not Foulkes configurations (so that they cannot be $q$-Foulkes configurations either):
 $$ [(2, 633), (3, 422)],\qquad   
  [(2, 933), (3, 622)],\qquad {\rm and}\qquad 
   [(2, 6333), (3, 4222)].$$
We have also found that there are even more intricate $q$-Foulkes configurations such as: 
   $$\qfoulkes2{(10, 4)}4{52},\qquad  \qfoulkes2{(10, 5)}5{42},\quad{\rm or}\quad \qfoulkes2{(12, 3)}3{82}. $$ 
Hence, a more extensive exploration is certainly needed here if we wish to explicitly characterize all possible cases. 

\section{From diagram Foulkes to diagram \texorpdfstring{$q$}{q}-Foulkes}
An intriguing development, explicitly checked out for all cases\footnote{Involving $67$ configurations in total.} with $n$ up to $30$, is that having both the necessary conditions~\pref{Fconf} and~\pref{teste} holding seems to be equivalent to having the full $q$-Schur positivity~\pref{qfoulkesconfig} holding too. In other words, we have the following general statement, which would reduce all $q$-versions to the $q=0$ case.

\begin{conj}\label{conjstrong}
 For partitions $\alpha$, $\beta$, $\gamma$, and $\delta$, such that $e_{\alpha'}[e_{\beta'}]=e_{\gamma'}[e_{\delta'}]$, we have 
\begin{equation}\label{suffcond}
  s_\alpha[s_\beta]\  \prec_s\  s_\gamma[s_\delta],\qquad \hbox{if and only if} \qquad  0\prec_s \frac{S_\gamma[S_\delta]-S_\alpha[S_\beta]}{1-q}.
\end{equation}
 \end{conj}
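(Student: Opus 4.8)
The plan is to treat the two implications separately: the reverse one is essentially a specialization, while the forward one carries all of the content.

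For the reverse implication, write $N(q):=S_\gamma[S_\delta]-S_\alpha[S_\beta]$ and $D(q):=N(q)/(1-q)$, so that $N(q)=(1-q)\,D(q)$ and hence $N(0)=D(0)$. Since $S_\mu(\mbf{x};0)=s_\mu$ and evaluation at $q=0$ commutes with plethysm, we get $N(0)=s_\gamma[s_\delta]-s_\alpha[s_\beta]$. Thus, if every Schur coefficient of $D(q)$ lies in $\N[q]$, then its constant term lies in $\N$ and $s_\alpha[s_\beta]\preceq_s s_\gamma[s_\delta]$ follows at once. The one delicate point is strictness: one must exclude the possibility that $D(q)\neq0$ while $D(0)=0$, which happens exactly in the plethystic-coincidence case $s_\gamma[s_\delta]=s_\alpha[s_\beta]$. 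I would rule this out using the dominance-triangularity of $S_\mu(\mbf{x};q)$ (its Schur expansion has leading term $s_\mu$ and lower terms indexed by partitions strictly dominated by $\mu$), which should force $N(q)\equiv0$ whenever its $q^0$-part vanishes. In the forward direction strictness is automatic, since a strict inequality at $q=0$ already makes some constant term positive.

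For the forward implication I would first reformulate $q$-positivity as a cumulative condition. Writing $\langle N(q),s_\lambda\rangle=\sum_i c_{\lambda,i}\,q^i$ and using $1/(1-q)=1+q+q^2+\cdots$, the coefficient $\langle D(q),s_\lambda\rangle$ lies in $\N[q]$ if and only if every partial sum $\sum_{i\le k}c_{\lambda,i}$ is nonnegative. Two boundary constraints are available: the hypothesis $s_\alpha[s_\beta]\prec_s s_\gamma[s_\delta]$ gives $c_{\lambda,0}=\langle N(0),s_\lambda\rangle\geq0$, and the standing hypothesis $e_{\alpha'}[e_{\beta'}]=e_{\gamma'}[e_{\delta'}]$, i.e. $N(1)=0$, gives $\sum_i c_{\lambda,i}=0$. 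It therefore suffices to establish the purely structural property that, for each $\lambda$, the coefficient sequence $(c_{\lambda,i})_i$ changes sign at most once, with the nonnegative coefficients preceding the nonpositive ones; granting this, the partial sums increase from $c_{\lambda,0}\geq0$ to their maximum and then decrease monotonically to the total $0$, so they remain nonnegative throughout, which is exactly the desired $q$-positivity.

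To attack this single-sign-change property I would realise both plethysms as graded Frobenius characteristics. The identity $S_\mu(\mbf{x};q)=\omega\,q^{n(\mu')}\,H_\mu(\mbf{x};1/q,0)$ exhibits $S_\mu$ as the graded Frobenius characteristic $\operatorname{grFrob}(V_\mu)$ of an explicit graded $\S_{|\mu|}$-module $V_\mu$, namely the sign twist, with reversed grading, of the Garsia--Procesi (Springer fibre) module; its degree-$0$ piece carries $s_\mu$. Composing via induction from the wreath product $\S_{|\delta|}\wr\S_{|\gamma|}$ then produces graded $\S_n$-modules $A$ and $B$ with $\operatorname{grFrob}(A)=S_\alpha[S_\beta]$ and $\operatorname{grFrob}(B)=S_\gamma[S_\delta]$; the condition $N(1)=0$ says that $A\cong B$ as ungraded modules, and the $q=0$ hypothesis says that $\operatorname{Frob}(A_0)\preceq_s\operatorname{Frob}(B_0)$. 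The main obstacle, which I do not expect to clear by soft arguments, is precisely the propagation of this degree-$0$ comparison to all degrees. Here I would try to exploit the hard-Lefschetz / $\mathfrak{sl}_2$-action on the Springer modules to constrain each $\langle N(q),s_\lambda\rangle$ to the single-sign-change shape above, and to carry out the argument by induction on the number of parts of $\gamma$, peeling off one part at a time and using associativity of plethysm to reduce to the single-part cases of Conjecture \ref{conjqFoulkes} and Conjecture \ref{conjqDoran}, with the $q=1$ evaluation of Formula \pref{formuledelH} (and its Doran generalisation) guaranteeing that the ungraded types keep matching at each stage. Equivalently, success amounts to producing a canonical decomposition $N(q)=\sum_j(1-q^{m_j})\,G_j$ with each $G_j$ Schur-positive, from which $D(q)=\sum_j[m_j]_q\,G_j$ is manifestly in $\N[q]$.
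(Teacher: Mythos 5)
You should first be aware that the statement you are addressing is Conjecture~\ref{conjstrong} of the paper: it is presented as an open conjecture, supported only by explicit verification of all $67$ relevant configurations with $n$ up to $30$, and the paper contains no proof of it. So there is no argument of the author's to compare yours against; the only question is whether your proposal closes the problem, and it does not. The reverse implication is essentially sound (evaluation at $q=0$ commutes with plethysm, so Schur positivity of the divided difference specializes to $s_\alpha[s_\beta]\preceq_s s_\gamma[s_\delta]$), although the strictness point is left hanging: you assert that dominance triangularity of the $S_\mu(\mbf{x};q)$ ``should force'' $S_\gamma[S_\delta]=S_\alpha[S_\beta]$ whenever $s_\gamma[s_\delta]=s_\alpha[s_\beta]$, but triangularity of each $S_\mu$ individually says nothing direct about the plethysms $S_\alpha[S_\beta]$, and no argument is supplied.

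The forward implication is where all the content lives, and your reduction concentrates the entire difficulty, without resolving it, into the ``single sign change'' property of the coefficient sequences $(c_{\lambda,i})_i$. That reduction is logically valid: if the nonnegative coefficients all precede the nonpositive ones, the partial sums rise from $c_{\lambda,0}\ge 0$ and then fall monotonically to the total $0$, hence stay nonnegative. But the single-sign-change property is at least as strong as the conjecture itself, and nothing in your sketch establishes it. The appeal to hard Lefschetz on Garsia--Procesi modules does not obviously transfer to the plethystic compositions $S_\alpha[S_\beta]$, which are wreath-product inductions whose graded characters are not governed by any Lefschetz-type statement you cite; the proposed induction ``peeling off one part of $\gamma$'' would at best reduce to Conjectures~\ref{conjqFoulkes} and~\ref{conjqDoran}, which are themselves open; and the closing reformulation $N(q)=\sum_j(1-q^{m_j})\,G_j$ with each $G_j$ Schur positive is equivalent to the desired conclusion rather than a step toward it. In short, your proposal is a reasonable plan of attack on an open problem, but every genuinely hard step is deferred to an unproven structural claim.
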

Clearly, when both $\alpha$ and $\gamma$ are one part partitions, respectively equal to $a$ and $c$, the second condition in~\pref{suffcond} is simply that $e_{\beta'}^a=e_{\delta'}^c$. Only this simpler version of the second condition is needed (together with the first one) in all cases explicitly calculated as detailed above. In other words, all configurations that we have found to satisfy \pref{suffcond} are such that $\alpha$ and $\gamma$ are reduced to one part.

It is worth noticing that, when both $\beta$ and $\delta$ are also one part partitions, the above conjecture says that \pref{doran} implies Conjecture-\ref{conjqDoran}; since the second condition is trivially verified in those instances. In view of earlier comments, the classical Foulkes conjecture would also imply its $q$-analog. Hence, showing Conjecture-\ref{conjstrong} would neatly wrap up the $q$-story.

\section{Iterated plethysm generalizations} Another intriguing possible extension\footnote{Which we don't know yet how to extend correctly to the $q$-context.} in the classical context consists in considering signed-combinations of higher iterated plethystic compositions, such as
\begin{equation}\label{trois}
   h{\langle c,b,a\rangle} -h{\langle b,c,a\rangle}-h{\langle c,a,b\rangle}+h{\langle b,a,c\rangle}+h{\langle a,c,b\rangle}-h{\langle a,b,c\rangle},
 \end{equation}
  for $a<b<c$, where we use the notation
     $$h{\langle a_1,a_2,\ldots,a_n\rangle}:=\begin{cases}
     h_{a_1}, & \text{if}\ n=1, \\[6pt]
      h_{a_1}[h{\langle a_2,\ldots,a_n\rangle}], & \text{if}\ n>1.
\end{cases}$$
for iterated plethysm to make our expressions more readable. Thus,
     $$h{\langle a,b,c\rangle}=h_a[h_b[h_c]]].$$
 For sure, the Schur positivity of certain linear combinations of iterates of plethysm follow immediately from general positivity properties of plethysm (see Appendix B). For instance, we may readily deduce from a special case of Foulkes conjecture such as $0\prec_s (h_c[h_b] -h_b[h_c])$ that
\begin{eqnarray*}   
        0\prec_s (h{\langle c,b,a\rangle} -h{\langle b,c,a\rangle}), &\quad{\rm and}\quad& 0\prec_s (h{\langle a,c,b\rangle} -h{\langle a,b,c\rangle}).
  \end{eqnarray*}
However, this is not enough to allow us to deduce~\pref{trois} from Foulkes conjecture.  

More generally, we could consider alternating sum analogs of Foulkes statement, over the symmetric group $\S_n$, of the form
     \begin{equation}\label{higher}
        0\preceq_s  \sum_{\sigma\in \S_n} \sign(\sigma)\  h{\langle a_{\sigma(1)},a_{\sigma(2)},\ldots,a_{\sigma(n)}\rangle},
     \end{equation}
when $a_1>a_2>\ldots >a_n>1$. For $n=2$, this is clearly Foulkes conjecture. We have checked for a limited number of cases, and only with $n=3$, that the resulting symmetric functions are indeed Schur positive. We underline that the degree of the symmetric functions involved is $a_1a_2\cdots a_n$, which is at least $(n+1)!$ under the hypothesis considered. Hence the verification of the Schur positivity of \pref{higher} rapidly goes beyond our computing capacity. Thus, we may not as ``safely'' as before state a conjecture to the effect that the right-hand side of~\pref{higher} should always be Schur positive.  C.~Reutenauer has also suggested that we consider ``immanant analogs'' of \pref{higher}. The simplest case, besides trivial situations or those already considered, corresponds to
    $$0\preceq_s 2h{\langle c,b,a\rangle} -h{\langle b,a,c\rangle} - h{\langle a,c,b\rangle},$$
 again for $a<b<c$.
Once more, only a rather small set of experiments suggests that Schur positivity may also hold in such a context.

\section*{Appendix A: Background on symmetric functions} \label{AppendixA}
Trying to make this text self-contained, we rapidly recall most of the necessary background on symmetric functions. As is usual, we often write symmetric functions without explicit mention of the variables. Thus, we denote by $p_k$ (as in~\cite{macdonald}) the power-sum symmetric functions
   $$p_k=p_k(x_1,x_2,x_3,\ldots) :=x_1^k+x_2^k+x_3^k+\ldots,$$ 
using which, we can expand the complete homogeneous symmetric functions as 
\begin{equation}\label{hpexpansion}
    h_n=\sum_{\mu\vdash n} \frac{p_\mu}{z_\mu},\qquad {\rm with}\qquad
       p_\mu:=p_1^{d_1}p_2^{d_2}\cdots p_n^{d_n},
   \end{equation}
where $d_k=d_k(\mu)$ is the multiplicity of the part $k$ in the partition $\mu$ of $n$, and $z_\mu$ stands for the integer
      $$z_\mu:=\prod j^{d_j}\,d_j!.$$
 For instance, we have the very classical expansions
     $$h_2=\frac{p_1^2}{2}+\frac{p_2}{2}, \qquad 
         h_3=\frac{p_1^3}{6}+\frac{p_1p_2}{2}+\frac{p_3}{3}.$$
As is also very well known, the homogeneous degree $n$ component $\lambda_n$ of the graded ring $\Lambda$ of symmetric functions, affords as a linear basis the set of Schur functions $\{s_\mu\}_{\mu\vdash n}$, indexed by partitions of $n$. Among the manifold interesting formulas regarding these, we will need the Cauchy-kernel identity.
\begin{eqnarray}
   h_n(\mbf{x}\mbf{y})&=& \sum_{\mu\vdash n} s_\mu(\mbf{x})s_\mu(\mbf{y})\label{cauchys}\\
        &=& \sum_{\mu\vdash n} \frac{p_\mu(\mbf{x})p_\mu(\mbf{y})}{z_\mu},\label{cauchyp}
    \end{eqnarray}
 with $h_n(\mbf{x}\mbf{y})=h_n(...,x_iy_j,...)$ corresponding to the evaluation of $h_n$ in the ``variables'' $x_iy_j$. Otherwise stated, we
 may express this by the generating function identity
    $$\sum_{n\geq 0} h_n(\mbf{x}\mbf{y})\,z^n=\prod_{i,j}\frac{1}{1-x_iy_j\,z}.$$ 
Plethysm is characterized by the following properties. Let $f_1$, $f_2$, $g_1$ and $g_2$ be any symmetric functions, and $\alpha$ and $\beta$ be in $\Rat$, then
 \begin{equation}\label{plethrules}
 \begin{array}{rcl}
a)& &(\alpha\,f_1+\beta\,f_2)[g]=\alpha\,f_1[g]+\beta\,f_2[g],\hskip2.5in\\[4pt]
b)& &(f_1\cdot f_2)[g]=f_1[g]\cdot f_2[g],\\[4pt]
c)& &p_k[\alpha\,g_1+\beta\,g_2]=\alpha\,p_k[g_1]+ \beta\,p_k[g_2],\\[4pt]
d)& &p_k[g_1\cdot g_2]=p_k[g_1]\cdot p_k[g_2],\\[4pt]
e)& &p_k[p_j]=p_{kj},\qquad{\rm and}\qquad p_k[q]=q^k.
 \end{array}
 \end{equation}
The first four properties reduce any calculation of plethysm to instances of the fifth one. In this context, it is useful to consider  variable sets as sums $\mbf{x}=x_1+x_2+x_3+\ldots$, so that $f[\mbf{x}]$ corresponds to the evaluation of the symmetric function $f$ in the variables $\mbf{x}$. In particular, Cauchy's formula gives an explicit expression for the expansion of 
             $$h_n[\mbf{x}\mbf{y}]=h_n[(x_1+x_2+x_3+\ldots)(y_1+y_2+y_3+\ldots)].$$
Likewise $f[1/(1-q)]=f[1+q+q^2+\ldots]$, corresponds to the evaluation $f(1,q,q^2,\ldots)$.
With all this at hand, the polynomials $H_n(\mbf{x};q)$ can be explicitly defined as
   \begin{equation}\label{defHn}
       H_n(\mbf{x};q):=[n]_q!\, (1-q)^n\,h_n\left[\frac{\mbf{x}}{1-q}\right]
   \end{equation}
where $[n]_q!$  stands for classical the $q$-analog of $n!$:
   $$[n]_q!:=[1]_q\,[2]_q\cdots [n]_q,\qquad {\rm with} \qquad
      [k]_q=1+q+\ldots+q^{k-1}.$$
Calculating with the plethystic rules~\pref{plethrules}, and formula~\pref{hpexpansion}, we get the explicit power-sum expansion
   \begin{equation}\label{defhnpower}
       H_n(\mbf{x};q) =\sum_{\mu\vdash n}\frac{[n]_q!\,}
           {z_\mu\,[\mu_1]_q[\mu_2]_q\cdots [\mu_\ell]_q}\, (1-q)^{n-\ell(\mu)}\ p_\mu(\mbf{x}),
   \end{equation}
where $\ell=\ell(\mu)$ is the number of parts of $\mu$.  
To get a Schur expansion for $H_n(\mbf{x};q)$, we recall the hook length expression
\begin{eqnarray} 
    s_\mu[1/(1-q)] &=&s_\mu(1,q,q^2,q^3,\ldots)\nonumber\\
         &=& q^{n(\mu)}\prod_{1\leq i\leq \mu_j} \frac{1}{1-q^{h_{ij}}},
    \end{eqnarray}
 where  $h_{ij}=h_{ij}(\mu)$ is the hook length of a cell $(i,j)$ of the Ferrers diagram of $\mu$, and $n(\mu):=\sum_{(i,j)} j$. Now, using Cauchy's formula~\pref{cauchys}, with $\mbf{y}=1+q+q^2+\ldots$, 
we find that
   \begin{equation}\label{defhnschur}
       H_n(\mbf{x};q) =\sum_{\mu\vdash n} \frac{q^{n(\mu)}[n]_q!}{\prod_{1\leq i\leq \mu_j}[h_{ij}]_q}\ s_\mu(\mbf{x}).
   \end{equation}
It is well known that the coefficient of $s_\mu(\mbf{x})$ occurring here is a positive integer polynomial that $q$-enumerates standard tableaux with respect to the charge statistic. This is the $q$-hook formula. Thus, we find the two expansions.
\begin{eqnarray*}
     H_3(\mbf{x};q)&=& \frac{[2]_q[3]_q}{6}\,p_1(\mbf{x})^3+\frac{[3]_q}{2}(1-q)\,p_1(\mbf{x})p_2(\mbf{x})+\frac{[2]_q}{3}(1-q)^2\,p_3(\mbf{x}),\\[4pt]
           &=& s_3(\mbf{x})+(q+q^2)s_{21}(\mbf{x})+q^3s_{111}(\mbf{x}).
             \end{eqnarray*}
It is clear that $H_n(\mbf{x};0)=h_n$. The $H_n(\mbf{x};q)$ function encodes, as a Frobenius transform, the character of several interesting isomorphic graded $\S_n$-modules such as:  the coinvariant space of $\S_n$, the space of $\S_n$-harmonic polynomials, and the cohomology ring of the full-flag variety. More precisely, this makes explicit the graded decomposition into irreducibles of these spaces. Thus, the coefficient of $s_\mu(\mbf{x})$ in formula~\pref{defhnschur} corresponds to the Hilbert series\footnote{Graded dimension.} of the isotropic component of type $\mu$ of this space. Using \pref{cauchyp} to expand $H_n$, the global Hilbert series of these modules can be simply obtained by computing the scalar product
\begin{eqnarray}\label{hilbcoinv}
   \langle p_1^n,H_n\rangle&=&\sum_{\mu\vdash n} \langle p_1^n,s_\mu\rangle\, q^{n(\mu)}   [n]_q!\prod_{(i,j)\in \mu} \frac{1-q}{1-q^{h_{ij}}},\\
     &=&\sum_{\mu\vdash n}  \langle p_1^n,p_\mu\rangle \frac{p_\mu(1/(1-q))}{z_\mu}\prod_{k=1}^n (1-q^k)\\
     &=& \left(\frac{1}{1-q}\right)^n \prod_{k=1}^n (1-q^k)\\
     &=& [n]_q!.
      \end{eqnarray} 
To see this, recall that  $\langle p_\mu,p_\lambda\rangle$ is zero if $\mu\not=\lambda$, and $\langle p_\mu,p_\mu\rangle=z_\mu$. To complete the picture, let us also recall that $\langle p_\mu,s_\lambda\rangle$ is equal to the value, on the conjugacy class $\mu$, of the character of the irreducible representation associated to $\lambda$. In particular, it follows that
\begin{equation}
     H_n(\mbf{x};1)=p_1^n= \sum_{\mu\vdash n} f_\mu\, s_\mu(\mbf{x}).
  \end{equation}
This is the Frobenius characteristic of the regular representation of $\S_n$, for which the multiplicities $f_\mu$
are given by the number of standard Young tableaux of shape $\mu$.
The $H_n(\mbf{x};q)$ are special instances of the combinatorial Macdonald polynomials $H_\mu(\mbf{x};q,t)$ (not defined here, see~\cite{bergeron} for details). For instance, we have
\begin{eqnarray*}
   H_3(\mbf{x};q,t)&=&s_{{3}}(\mbf{x})+ \left( {q}^{2}+q \right) s_{{21}}(\mbf{x})+{q}^{3}s_{{111}}(\mbf{x}),\\
H_{21}(\mbf{x};q,t)&=&s_{{3}}(\mbf{x})+ \left( q+t \right) s_{{21}}(\mbf{x})+q\,t\,s_{{111}}(\mbf{x}),\\
H_{111}(\mbf{x};q,t)&=&s_{{3}}(\mbf{x})+ \left( {t}^{2}+t \right) s_{{21}}(\mbf{x})+{t}^{3}s_{{111}}(\mbf{x}).
\end{eqnarray*}
Beside this notion of Frobenius transform that ``formally'' encodes $\S_n$-irreducibles as Schur function, another more direct interpretation of the above formulas is in terms of characters of polynomial representations of $\GL(V)$, with $V$ an $N$-dimensional space over $\mathbb{C}$. Recall that the character, of a representation $\rho:\GL(V)\rightarrow \GL(W)$, is a symmetric function of $\chi_\rho(x_1,x_2,\ldots,x_N)$ of the eigenvalues of operators in $\GL(V)$.  Through Schur-Weyl duality, out of any $\S_n$-module $R$ and any $\GL(V)$-module $U$, one may construct a representation of $\GL(V)$:
          $$R(U):=R\otimes_{\C\S_n} U^{\otimes n},$$
 where $\S_n$ acts on $U^{\otimes n}$ by permutation of components. This construction is functorial:
      $$R:\hbox{$GL(V)$-Mod}\longrightarrow  \hbox{$GL(V)$-Mod},$$
and the character of $R(U)$ is the plethysm $f[g(x_1x_2,\ldots,x_N)]$, whenever $f$ is the Frobenius characteristic of $R$ and $g$ the character of $U$.  Furthermore, under this construction, irreducible polynomial representations of $\GL(V)$ correspond to irreducible $\S_n$-modules $R$. If such is the case, one writes $S^\lambda(V)$ when $R$ is irreducible of type $\lambda$. The corresponding character is the Schur function $s_\lambda(x_1,x_2,\ldots,x_N)$. For the special case $\lambda=(n)$, we get the symmetric power $S^a(V)$ whose character is $h_a(x_1,x_2,\ldots,x_N)$, hence the character of $S^a(S^b(V))$ is the plethysm $h_a[h_b]$.

\section*{Appendix B: Schur positivity}\label{AppendixB}
The proof of $\N$-positivity of the solution of the recurrence occurring in Proposition-\ref{proprec} may be directly translated in terms $\N$-positivity of the following, as a polynomial in $z$. Let us set
 \begin{equation}
     \rho(z;a):=\sum_{k=1}^\infty k\,a\binom{a+1}{2k+1}\, z^{2\,k+1},
 \end{equation}
and consider the following recurrence for $\theta_n(z)=\theta_n(z;a)$
 $$\theta_n(z)=\left( 3+z \right) \theta_{n-1}(z)+ \left( 1+z \right)  \left( z-3
 \right) \theta_{n-2}(z)+ \left( 1+z \right) ^{2} \left( 1-z
 \right) \theta_{n-3}(z),$$
 with initial conditions $\theta_0(z):=\rho(z;a-1)$, $\theta_1(z):=\rho(z;a)$, and
  $$\theta_2(z) = \sum_{k=1}^\infty\left( k(a-1)\binom{a+2}{2k+1}\, z^{2k+1} +
2k \binom{a+1}{2k+1}\, (1+z)\,z^{2\,k+1}\right).$$
For any $a>2$ (wth in $\N$), $\theta_{n}(z;a)$ is clearly a degree $a+n$ polynomials in the variable $z$, with positive integer coefficients,
and the link with our previous setup is simply that 
     $$\Theta_a(b) =h_2^b\, \theta_{b-a}(e_2/h_2;a).$$

\subsection*{General properties of Schur positivity}
Consider the ring $\Lambda_{\mbf{q}}$ of symmetric function with coefficients in the field of fractions $\Q(\mbf{q})$, with $\mbf{q}=q_1,q_2,\ldots$, in which we are interested in expansions in the Schur function (linear) basis.
We say that $f=f(\mbf{x};\mbf{q})$ in $\Lambda_{q,t}$ is \define{Schur-positive} if we have
    $$f(\mbf{x};\mbf{q})=\sum_{\lambda} a_\lambda(\mbf{q})\,s_\lambda(\mbf{x}),\qquad{\rm with}\qquad a_\lambda(\mbf{q})\in\N[\mbf{q}].$$
In other terms, for all partition $\lambda$, the coefficient $a_\lambda(\mbf{q})$ is a positive integer polynomial in the parameters $\mbf{q}$. If the difference $f-g$ is Schur positive, we  write $g\preceq_s f$. We have the following properties for this partial order on symmetric functions, whenever $f_1\preceq_s f_2$ and $g_1\preceq_s g_2$:
\begin{enumerate}\itemsep=4pt
\item[(1)] $f_1+g_1\preceq_s f_2+g_2$, evident from definition;
\item[(2)] $f_1\cdot g_1\preceq_s f_2\cdot g_2$, since products of Schur function expand positively in the Schur basis, with structure coefficients given by the Littlewood-Richardson rule;
\item[(3)] $f_2^\perp g_1\preceq_s f_1^\perp g_2$, where $f^\perp$ is the dual operator of multiplication by $f$ for the usual scalar product on symmetric functions (for which the Schur functions form an orthonormal basis);
\item[(4)] $f_1\circ g_1\preceq_s f_2\circ g_2$, since  plethysms of Schur functions expand positively in the Schur basis, as shown in Macdonald (see p.136, (8.10));
\item[(5)] $\overline{f_1}\cdot\overline{g_1}\preceq_s\overline{f_1\cdot g_1}$.

\end{enumerate}
  

\end{document}